\newtheorem{thm}{Theorem}[section]\newtheorem{lem}[thm]{Lemma}\theoremstyle{definition}\newtheorem{as}[thm]{Assumption}\theoremstyle{remark}\newtheorem{rem}[thm]{Remark}\newtheorem{exa}[thm]{Example}\numberwithin{equation}{section}\newcommand{\R}{\mathbb{R}}\newcommand{\PP}{\mathbb{P}}\newcommand{\sign}{\mathsf{sign}}\newcommand{\dbracc}[1]{[\kern-0.15em[ #1 ]\kern-0.15em]}\newcommand{\dbraco}[1]{[\kern-0.15em[ #1 [\kern-0.15em[}\newcommand{\dbraoc}[1]{]\kern-0.15em] #1 ]\kern-0.15em]}\newcommand{\dbraoo}[1]{]\kern-0.15em] #1 [\kern-0.15em[} \newcommand{\be}{\begin{equation}}\newcommand{\ee}{\end{equation}}\newcommand{\ba}{\begin{aligned}}\newcommand{\ea}{\end{aligned}}\renewcommand{\P}{\PP}\newcommand{\Pas}{\text{$\P$--a.s.}}
\begin{document}

\title[Asymptotic analysis of the expected utility maximization problem]
{Asymptotic analysis of 
the expected utility maximization problem with respect to perturbations of the num\'eraire}

\begin{abstract}
In an incomplete model, where  under an appropriate num\'eraire, the stock price process is driven by a sigma-bounded  semimartingale, we investigate the behavior of the expected utility maximization problem under small perturbations of the num\'eraire. 
We establish a quadratic approximation of the value function and a first-order expansion of the terminal wealth. Relying on a description of the base return process in terms of its semimartingale characteristics, we also construct wealth processes and nearly optimal strategies that allow for matching the primal value function up to the second order. 
We also link perturbations of the num\'eraire to distortions of the finite-variation part and martingale part of the stock price return and characterize the asymptotic expansions in terms of the risk-tolerance wealth process. 
\end{abstract}

\author{Oleksii Mostovyi}\thanks{The author would like to thank Mihai S\^ irbu for discussing topics related to the core of the paper. The author has been supported by the National Science Foundation under grants No. DMS-1600307 (2015 - 2019) and No. DMS-1848339 (2019 - 2024). Any opinions, findings, and conclusions or recommendations expressed in this material are those of the author's and do not necessarily reflect the views of the National Science Foundation.}

\address{Oleskii Mostovyi, Department of Mathematics, University of Connecticut, Storrs, CT 06269, United States}
\email{oleksii.mostovyi@uconn.edu}%

\subjclass[2010]{91G10, 93E20. \textit{JEL Classification:} C61, G11.}
\keywords{Sensitivity analysis, stability, utility maximization, optimal investment, risk-tolerance process, arbitrage of the first kind, no unbounded profit with bounded risk, local martingale deflator, duality theory, semimartingale, incomplete market, sigma-bounded process, semimartingale characteristics, Sherman-Morrison formula, envelope theorem, num\'eraire.}%

\date{\today}%

\maketitle

\section{Introduction}

In the settings of a complete financial market, it is proven in \cite{GemKarRoc95}  that the choice of a num\'eraire affects neither arbitrage-free prices of the securities nor replicating strategies (see also a discussion in \cite{HobHen}). However, by an appropriate change of num\'eraire (sometimes combined with a change of measure), one can simplify a valuational framework, see, e.g., \cite{GemKarRoc95}. Possibly the most illuminating example corresponds to the LIBOR market interest rate model, which is based on a dynamic change of num\'eraire and which allows for pricing a wide class of interest rate derivatives.

In incomplete markets, the situation is more delicate in general. As num\'eraire is a crucial ingredient in essentially all problems of mathematical finance, it is important to understand their sensitivity to misspecifications of the num\'eraire. In this paper, in a general incomplete semimartingale model of a financial market, we investigate the response of the value function and the optimal solution to the expected utility maximization from terminal wealth problem to small perturbations of the num\'eraire.  To the best of our knowledge, asymptotics for the expected utility maximization problem to perturbations of num\'eraire has not been studied in the literature. We establish a second-order expansion of the value function, a first-order approximation of the terminal wealth, and construct wealth processes and corrections to optimal strategies that allow for the second-order matching of the primal value  function. The latter development is conducted via a representation of the base return process in terms of its semimartingale characteristics. In particular, we establish an envelope-type theorem for both primal and dual value functions. We also characterize  the asymptotic expansions via the risk-tolerance wealth process, provided that the later exists, and give a characterization of the correction terms via a Galtchouk-Kunita-Watanabe decomposition under certain changes of measure and num\'eraire. Note that the risk-tolerance wealth process was introduced in \cite{KS2006b}.

Our results provide a way to estimate the effect of misspecification of the initial data on the expected utility maximization problem. This in particular applies to models, which allow for explicit solutions, see e.g., \cite{Thaleia01}, \cite{KallsenLog}, \cite{imHuMuller05}, \cite{KS2006b}, \cite{GR12}, \cite{Horst2014}, \cite{Santacroce2}, and to so-called  asymptotically complete models, see \cite{Scott17, RSA17}. 
In many cases,  a closed-form solution ceases to exist under perturbations of model parameters.  Note that \cite{KS2006b}, \cite{Horst2014}, and \cite{Santacroce2} deal with a general utility function. This, in particular, emphasizes the importance of non confining oneself to power or logarithmic~utilities.

In order to obtain the asymptotic expansions mentioned above, we introduce a {\it linear} parametrization of {\it returns} of a perturbed family of num\'eraires such that 
the corresponding num\'eraires are positive wealth processes for the values of the parameter being sufficiently close to $0$. Note that positivity  
is a {necessary} condition for a process to be considered a num\'eraire.  
 Even though, in principle, by a num\'eraire one can choose any strictly positive semimartingale, in this work, we focus on tradable num\'eraires, in the terminology of \cite{Becherer}, i.e., the ones can be obtained as  outcomes of trading strategies. Such a choice is standard in the mathematical finance literature, see for example \cite{Becherer}, \cite{KS2006}, \cite{KS2006b}, \cite{KarKar07}.

In the case when the stock price process is one-dimensional and continuous, our structure of perturbations is closely connected to distortions  of the finite-variation part of the return of the stock (as in \cite{MostovyiSirbuModel}) and perturbations of the volatility (as in \cite{herJohSei17}), see the discussion in section \ref{8241a} below. 
The proofs 
rely on the 
auxiliary minimization problems, which in turn are closely related to the ones in \cite{Pham98}, \cite{phamSchw},  \cite{laurenPham}, \cite{KalCher07}, \cite{CzichSchweiz2013}, \cite{Santacroce1}, see also an overview of several approaches to quadratic problems in \cite{PhamBook}. 
Asymptotics analysis based on Malliavin calculus is implemented in \cite{Monoy2013}. 
Simultaneous primal-dual asymptotic expansion method in mathematical finance has been (arguably)  introduced in \cite{vicky02} in the context of a utility-based pricing problem. Related analysis has been performed (at approximately the same time) in \cite{henHob02}, \cite{Kallsen02}. The first-order differentiability of the value functions  with respect to the perturbations of the initial wealth  and convergence of the optimizers are established in \cite{KS}, whereas twice-differentiability is investigated~in~\cite{KS2006}. 

As we expand the value function also in the initial wealth, analysis from \cite{KS2006} turns out to be very helpful in the present work. On the other hand, Remark \ref{rem:DeltaXStrategies} below gives {\it corrections to the optimal trading strategy}, such that the corresponding wealth processes match the indirect utility up to the second order. This complements the results in \cite{KS2006}. In this part, a representation of the base return process and in terms of its semimartingale characteristics is crucial. 

The closest paper (to the best of our knowledge), mathematically, is \cite{MostovyiSirbuModel}, which deals with different perturbations, namely of  the market price of risk, and where the underlying framework includes a continuous and one-dimensional stock price process. In the present paper, we impose neither one-dimensionality nor continuity of the stock (and the perturbations are different from the ones in \cite{MostovyiSirbuModel}).

The remainder of this paper is organized as follows. In section \ref{secModel}, we present the model, in section \ref{secExpansionTheorems} we formulate auxiliary minimization problems and state the expansion theorems; section \ref{approxTradingStrategies}, contains an explicit construction of nearly optimal wealth processes and corrections to the optimal strategies that allow for matching the primal value function up to the second order. In section \ref{secProofs},  we give proofs of these results. In section \ref{secRiskTol}, we relate the expansion theorems to the existence of a risk-tolerance wealth process, and we conclude the paper with section \ref{secCounterexamples}, where we show the necessity of Assumptions \ref{boundedJumps} and \ref{integrabilityAssumption}, under which the expansion theorems are proven.

\section{Model}\label{secModel}
\subsection{Parametrized family of stock prices processes}
Let us consider a complete stochastic basis $\left(\Omega, \mathcal F, \{\mathcal F_t\}_{t\in[0,T]}, \mathbb P\right)$, where $T\in (0,\infty)$ is the time horizon, $\mathcal F$ satisfies the usual conditions, and $\mathcal F_0$ is a trivial $\sigma$-algebra. For the $0$-model, we assume that there are  $d$ traded stocks, whose returns are modeled via a general $d$-dimensional semimartingale $(\rho^1, \dots, \rho^d)$, as well as a bank account, whose price is equal to $1$ at all times. We set $R = (0,\rho^1,\dots, \rho^d)$ and suppose that (every component of) $R_0 =0$.

The num\'eraire of $0$-model is $N^0 \equiv 1$, equivalently the num\'eraire, whose return equals to zero and whose initial value equals $1$. For perturbed models, we introduce linear perturbations of the returns of the num\'eraires, which are given by 

\begin{equation}\label{numReturn}
\varepsilon\theta \cdot {R}, \quad \varepsilon \in(-\varepsilon_0, \varepsilon_0),
\end{equation}
where $\theta$ is some predictable and ${R}$-integrable process \textcolor{black}{that represents the {\it proportions of a wealth process} invested in the corresponding stocks for some portfolio (i.e., $\theta^0_t = 1 - \sum\limits_{i=1}^d \theta^i_t$, $t\in[0,T]$) and that satisfies Assumptions \ref{boundedJumps} and \ref{integrabilityAssumption} below,} and $\varepsilon_0$ is a positive constant specified via Assumption \ref{boundedJumps}. Equivalently, \eqref{numReturn} can be restated in terms of the parametrized family of num\'eraires $(N^\varepsilon)_{\varepsilon\in(-\varepsilon_0, \varepsilon_0),}$ that satisfy 
\begin{equation}\label{numeraire}
N^\varepsilon = \mathcal E\left((\varepsilon\theta) \cdot {R}\right),\quad \varepsilon \in(-\varepsilon_0, \varepsilon_0),
\end{equation}
where $\mathcal E$ denotes the stochastic exponential. 
Thus, the family of stock price processes under num\'eraires $N^\varepsilon$ is given by
\begin{equation}\nonumber
S^\varepsilon := \left( \frac{1}{N^\varepsilon}, \frac{\mathcal E(\rho^1)}{N^\varepsilon},\dots,\frac{\mathcal E\left(\rho^d\right)}{N^\varepsilon}\right), \quad \varepsilon \in(-\varepsilon_0, \varepsilon_0).
\end{equation}

\subsection{Primal problem}
Let $U$ be a utility function satisfying Assumption \ref{rra} below.
\begin{as}\label{rra}
The function 
$U$: $(0,\infty)\to\mathbb R$ is strictly increasing, strictly concave, two times continuously differentiable, and is such that for some positive  constants $c_1$ and $c_2$, we have 
\begin{equation}\nonumber 
{c_1} \leq A(x) := -\frac{U''(x)x}{U'(x)} \leq c_2.
\end{equation}
\end{as}
The admissible wealth processes are given by
\begin{equation}\nonumber 
\mathcal X(x,\varepsilon) := \left\{ 
x + H\cdot S^{\varepsilon}\geq 0:~H~{\rm is}~S^\varepsilon-{\rm integrable}
\right\},\quad (x,\varepsilon)\in[0, \infty)\times(-\varepsilon_0, \varepsilon_0).
\end{equation}
The primal value functions (parametrized by $\varepsilon$) are given by
\begin{equation}\label{primalProblem}
u(x, \varepsilon) := \sup\limits_{X\in\mathcal X(x,\varepsilon)} \mathbb E\left[ U(X_T)\right],\quad (x,\varepsilon)\in(0, \infty)\times (-\varepsilon_0, \varepsilon_0).
\end{equation}
We use the convention
\begin{displaymath}
\mathbb E\left[ U(X_T)\right] := -\infty, \quad {\rm if} \quad 
\mathbb E\left[ U^{-}(X_T)\right] = \infty, 
\end{displaymath}
where $U^{-}$ is the negative part of $U$. 
\subsection{Dual problem}Analysis of \eqref{primalProblem} is performed via the dual problem. 
As usually, let us  set 
\begin{equation}\label{dualDomain}
\begin{split}
\mathcal Y(y,\varepsilon):= \left\{
Y\geq 0:\quad\right.& Y~{\rm is~a~supermartingale~starting~at~}y,\\
& {\rm and~such~that} XY = \left(X_tY_t\right)_{t\geq 0}~{\rm is~a~supermartingale}\\
& {\rm for~each}~X\in~\mathcal X(1, \varepsilon)\left.\right\},  
\quad (y,\varepsilon)\in[0, \infty)\times (-\varepsilon_0, \varepsilon_0).
 \end{split}
\end{equation}
\begin{rem}
Definition \eqref{dualDomain} is an alternative version of a two-step natural definition of the dual domain, where in the first step one defines $\mathcal Y(y,0)$ as above and then sets 
$\mathcal Y(y,\varepsilon) = \mathcal Y(y,0) N^\varepsilon$. However, Lemma \ref{keyConcreteCharLem} asserts that both constructions are equivalent.
\end{rem}
We set the convex conjugate to $U$ as
\begin{equation}\nonumber 
V(y) := \sup\limits_{x > 0} \left( U(x) - xy\right),\quad  y\in (0,\infty).
\end{equation}
Let us recall that for every $x>0$ with $y = U'(x)$, we have 
$$U''(x)V''(y) = -{1}.$$
Setting $$B(y) := -\frac{V''(y)y}{V'(y)}= \frac{1}{A(x)},$$
we deduce from Assumption \ref{rra} that
$$\frac{1}{c_2} \leq B(y)  \leq \frac{1}{c_1}, \quad y> 0.$$
The corresponding dual value functions are set as
\begin{equation}\label{dualProblem}
v(y, \varepsilon) := \inf\limits_{Y\in\mathcal Y(y, \varepsilon)}\mathbb E\left[ 
V\left(Y_T\right)
\right], \quad (y,\varepsilon)\in(0, \infty)\times (-\varepsilon_0, \varepsilon_0).
\end{equation}
With  $V^{+}$ denoting the positive part of $V$, if 
\begin{displaymath}
\mathbb E\left[ 
V^{+}\left(Y_T\right)
\right] = \infty \quad {\rm we~set}\quad
\mathbb E\left[ 
V\left(Y_T\right)
\right] := \infty
\end{displaymath}

\subsection{Technical assumptions}
For nondegeneracy of $0$-model, we suppose that
\begin{equation}\label{finCond}
 {\rm there~exists}\quad x>0\quad {\rm such~that}\quad u(x,0) < \infty.
\end{equation}

One needs to ensure that the perturbations of the form \eqref{numReturn} (or equivalently in the form \eqref{numeraire}) are such that the resulting processes $N^\varepsilon$ are nonnegative at least for $\varepsilon$ being sufficiently close to $0$, as a necessary way of making $N^\varepsilon$'s num\'eraires. This can be achieved via the following condition. Example \ref{counterExBoundedJumps} below demonstrates the necessity of a boundedness Assumption \ref{boundedJumps}.
\begin{as}\label{boundedJumps}
We  
suppose that 
there exists $\varepsilon_0>0$ such that the jumps of the process $\bar R:= -\theta\cdot {R}$ are bounded by $\tfrac{1}{2\varepsilon_0}$, i.e., 
$$|\Delta \bar R_t|\leq \tfrac{1}{2\varepsilon_0},\quad t\in[0,T].$$
\end{as}
Note that Assumption \ref{boundedJumps} implies that $N^\varepsilon$ in \eqref{numeraire} is a strictly positive process $\Pas$, for every $\varepsilon\in(-\varepsilon_0 ,\varepsilon_0)$.

\subsection{Absence of arbitrage}
The absence of arbitrage opportunities in the $0$-model in the sense of no unbounded profit with bounded risk follows from condition \eqref{NUPBR}, which by the results of \cite{KarKar07}  can equivalently be stated as 
\begin{equation}\label{NUPBR}
\mathcal Y(1, 0)~{\rm contains~a~strictly~positive~element}.
\end{equation}
\textcolor{black}{We refer to \cite{KarKar07} for characterizations of no unbounded profit with bounded risk condition, which is also equivalent to the existence of a strict sigma-martingale density, see  \cite{TakaokaSchweizer} for details. }
\begin{rem}
Condition \eqref{NUPBR} and Lemma \ref{keyConcreteCharLem} imply no unbounded profit with bounded risk for every $\varepsilon\in(-\varepsilon_0, \varepsilon_0)$, thus 
$$\mathcal Y(1, \varepsilon)\neq \emptyset,\quad \varepsilon\in(-\varepsilon_0, \varepsilon_0).$$
\end{rem}
\begin{rem}\label{rem4221} 
Assumption \ref{rra} implies that $U$ satisfies the Inada conditions and that asymptotic elasticity of $U$ (in the sense of \cite{KS}) is less than $1$, see \cite[Lemma 3]{KS2006} for the proof. Therefore, 
under \eqref{rra}, \eqref{finCond}, and \eqref{NUPBR}, existence and uniqueness of a solution to \eqref{primalProblem} for every $x>0$  and other standard assertions of the utility maximization theory follow from the abstract theorems in \cite{KS}.
\end{rem}
\begin{rem}
\cite[Theorem 2.1]{KabKarSong16} gives a characterization of no unbounded profit with bounded risk condition in terms of the existence of {\it local martingale} deflators (as opposed to  {\it supermartingale} deflators in \cite{KarKar07}). 
\end{rem} 
For every $x>0$, under Assumption \ref{rra}, \eqref{finCond}, and \eqref{NUPBR} it follows from Remark \ref{rem4221}, that $y = u_x(x,0)$ exists and is unique and there exist unique solutions to \eqref{primalProblem} and \eqref{dualProblem}, $\widehat X(x,0)$ and $\widehat Y(y,0)$, respectively, \textcolor{black}{such that $\widehat X(x,0)\widehat Y(y,0)$ is a uniformly integrable martingale under $\mathbb P$}.
For $x>0,$ and with $y=u_x(x,0)$ we define a probability measure $\mathbb R$ via 
\begin{equation}\label{measureR}
\frac{d \mathbb R(x)}{d\mathbb P} := \frac{\widehat X_T(x,0) \widehat Y_T(y,0)}{xy}.
\end{equation}
Note that,  $\mathbb R(x)$ defined in \eqref{measureR} coincides with the measure $\mathbb R(x)$ in the notations of \cite{KS2006}, \cite{KS2006b} and with measure $\mathbb R(x,0)$ in terminology of \cite{MostovyiSirbuModel},  \textcolor{black}{and that $\mathbb R(x)$ naturally appears in the asymptotic analysis of optimal investment, see \cite{KS2006}, \cite{KS2006b}, and \cite{MostovyiSirbuModel}.} 

Since we consider an expansion also in the initial wealth, in order for the value function $u$ to be twice differentiable in the first argument (which corresponds to the initial wealth $x$), we need to impose the sigma-boundedness assumption, see \cite[Definition 1]{KS2006} for the definition, also \cite{KS2006b} and \cite{MihaiSara} contain discussions on this subject and applications of sigma-bounded processes to the problem of the expected utility maximization.
 \begin{as}\label{sigmaBoundedness}
Let $x>0$ be fixed. We suppose that 
the process 
\begin{displaymath}
S^{\widehat X(x,0)} := \left( \frac{x}{\widehat X(x,0)}, \frac{x\mathcal E(\rho^1)}{\widehat X(x,0)}, \dots,\frac{x\mathcal E(\rho^d)}{\widehat X(x,0)}\right)
\end{displaymath}
is sigma-bounded.
\end{as}
\textcolor{black}{When using $S^{\widehat X(x,0)}$, we discount the assets by the normalized primal optimizer for the $0$-model.} We also need the following integrability assumption on perturbations, whose necessity is demonstrated in Example \ref{8231} below.
\begin{as}\label{integrabilityAssumption}
Let $x>0$ be fixed.
There exists $c>0$, such that 
$$ \mathbb {E}^{\mathbb R(x)}\left[\exp\left\{c\left(\left|\bar R_T\right| + [ \bar R, \bar R]_T \right)\right\} \right]<\infty.$$
\end{as}

\section{Expansion Theorems}\label{secExpansionTheorems}
We begin with an envelope theorem.  
\begin{thm}\label{mainThm1}
Let $x>0$ be fixed, assume that \eqref{finCond} and \eqref{NUPBR} as well as Assumptions \ref{rra}, \ref{boundedJumps}, \ref{sigmaBoundedness},  and \ref{integrabilityAssumption} hold, and let $y = u_x(x,0)$. 
 Then there exists $\bar \varepsilon>0$ such that for every $\varepsilon \in(-\bar \varepsilon, \bar \varepsilon)$,
$u(\cdot,\varepsilon)$ and 
$v(\cdot,\varepsilon)$ are finite-valued functions. 
The functions $u$ and $v$ are jointly differentiable (and, consequently, continuous) at $(x,0)$ and $(y,0)$, respectively. We also have  
\begin{equation}\label{gradientC}
\nabla u (x,0)=\begin{pmatrix}y\\ u_{\varepsilon}(x,0)\end{pmatrix}\quad and \quad \nabla v (y,0)=\begin{pmatrix}-x\\v_{\varepsilon}(y,0)\end{pmatrix},
\end{equation} where
\begin{equation}\nonumber 
u_\varepsilon(x,0) =v_\varepsilon(y,0)= xy\mathbb E^{\mathbb R(x)}\left[ \bar R_T\right]. 
\end{equation}

\end{thm}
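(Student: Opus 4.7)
The plan is to use Lemma \ref{keyConcreteCharLem} (and the dual numéraire-invariance for wealth processes) to rewrite the value functions as optimizations over the unperturbed admissible sets,
$$u(x,\varepsilon) \;=\; \sup_{X\in\mathcal X(x,0)}\mathbb E\bigl[U(X_T/N^\varepsilon_T)\bigr],\qquad v(y,\varepsilon) \;=\; \inf_{Y\in\mathcal Y(y,0)}\mathbb E\bigl[V(Y_T N^\varepsilon_T)\bigr].$$
Plugging in the unperturbed optimizers $\widehat X(x,0)$ and $\widehat Y(y,0)$, and invoking Fenchel's inequality $u\le v+xy$, produces the two-sided sandwich
$$\mathbb E\bigl[U(\widehat X_T(x,0)/N^\varepsilon_T)\bigr]\;\le\;u(x,\varepsilon)\;\le\;v(y,\varepsilon)+xy\;\le\;\mathbb E\bigl[V(\widehat Y_T(y,0)\,N^\varepsilon_T)\bigr]+xy,$$
whose four terms coincide with $u(x,0)=v(y,0)+xy$ at $\varepsilon=0$. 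It therefore suffices to differentiate the two outer envelopes at $\varepsilon=0$ and check they share a common derivative.

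I would compute these derivatives by differentiating under the expectation. Since $N^\varepsilon=\mathcal E(-\varepsilon\bar R)$ with $N^0\equiv 1$, the pathwise identity $\partial_\varepsilon N^\varepsilon_T\big|_{\varepsilon=0}=-\bar R_T$ holds; combined with the primal--dual relations $U'(\widehat X_T(x,0))=\widehat Y_T(y,0)$ and $V'(\widehat Y_T(y,0))=-\widehat X_T(x,0)$, both envelope derivatives collapse to
$$\mathbb E\bigl[\widehat X_T(x,0)\,\widehat Y_T(y,0)\,\bar R_T\bigr] \;=\; xy\,\mathbb E^{\mathbb R(x)}[\bar R_T]$$
by the definition \eqref{measureR} of $\mathbb R(x)$, which is precisely the claimed common value of $u_\varepsilon(x,0)$ and $v_\varepsilon(y,0)$.

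The main technical obstacle is the justification of this interchange. The plan is to dominate the first-order Taylor remainders of $U(\widehat X_T/N^\varepsilon_T)$ and $V(\widehat Y_T N^\varepsilon_T)$ uniformly for $|\varepsilon|$ small by combining three ingredients: (i) Assumption \ref{boundedJumps}, which keeps $N^\varepsilon$ strictly positive and controls $N^\varepsilon_T$ together with its $\varepsilon$-derivatives by $\exp(c|\bar R_T|+c[\bar R,\bar R]_T)$; (ii) the relative risk-aversion bounds of Assumption \ref{rra}, which yield $|U''(z)z^2|\le c_2\,U'(z)z$ and $|V''(y')y'^2|\le c_1^{-1}|V'(y')|y'$, so that the remainders are controlled by $\widehat X_T\widehat Y_T$ times a polynomial in $\bar R_T$ and $[\bar R,\bar R]_T$; and (iii) Assumption \ref{integrabilityAssumption}, which supplies the exponential moments of $|\bar R_T|+[\bar R,\bar R]_T$ under $\mathbb R(x)$ needed to close the dominated-convergence argument. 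The same estimates give finiteness of $u(\cdot,\varepsilon)$ and $v(\cdot,\varepsilon)$ on a symmetric neighborhood $(-\bar\varepsilon,\bar\varepsilon)$.

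For the joint differentiability at $(x,0)$ and $(y,0)$, I would pair the $\varepsilon$-partial just established with the $x$-differentiability $u_x(x,0)=y$, $v_y(y,0)=-x$ furnished by the abstract optimal-investment theory of \cite{KS} (recalled in Remark \ref{rem4221}). Since the sandwich bounds depend on $x$ only through $\widehat X(x,0)$ and $\widehat Y(y,0)$, whose continuity in $x$ is classical, the above domination transfers to a locally uniform control, yielding continuity of both partials on a neighborhood of $(x,0)$ and hence Fréchet differentiability, which in turn implies the stated continuity.
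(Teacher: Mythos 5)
Your computation of the common first-order derivative $u_\varepsilon(x,0)=v_\varepsilon(y,0)=xy\,\mathbb E^{\mathbb R(x)}[\bar R_T]$ is correct and follows the same sandwich idea the paper uses: Lemma~\ref{keyConcreteCharLem} lets one plug the unperturbed optimizers $\widehat X(x,0)/N^\varepsilon$ into $\mathcal X(x,\varepsilon)$ and $\widehat Y(y,0)N^\varepsilon$ into $\mathcal Y(y,\varepsilon)$, Fenchel's inequality $u\le v+xy$ squeezes the value functions between the two resulting test integrals, and the domination you describe (Assumption~\ref{boundedJumps} controlling $N^\varepsilon$ and its derivatives, the relative risk-aversion/tolerance bounds of Assumption~\ref{rra} via Lemma~\ref{rraCor}, and the exponential moment in Assumption~\ref{integrabilityAssumption} closing the dominated-convergence step) is exactly what makes the envelope differentiation legitimate and also yields finiteness on a neighborhood.

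Where your argument has a genuine gap is the last step, joint differentiability. Two problems. First, your sandwich only perturbs $\varepsilon$ at frozen $x$: to control $u(x+\Delta x,\varepsilon)$ one must compare against a two-parameter competitor such as $\frac{x+s\Delta x}{x}\,\widehat X(x,0)/N^{s\varepsilon}$, which is what the paper does (through Lemma~\ref{12132} and its primal/dual consequences); without the joint perturbation you only get directional derivatives along the axes, not a first-order Taylor expansion in $(\Delta x,\varepsilon)$. Second, your proposed route to upgrade the two partials into Fr\'echet differentiability---establish \emph{continuity of both partials on a neighborhood of $(x,0)$}---is substantially stronger than what is proved and would require showing that $u_\varepsilon(x',\varepsilon')$ exists and is continuous for all $(x',\varepsilon')$ near $(x,0)$, in particular at perturbed $\varepsilon'\ne 0$ where you no longer have an explicit optimizer or an explicit sandwich. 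The paper avoids this entirely: it proves a one-sided \emph{quadratic} lower bound for $u$ and a one-sided quadratic upper bound for $v$ around $(x,0)$ and $(y,0)$ respectively (both obtained by Taylor-expanding the two-parameter test competitors and dominating the second-order remainder), and then the conjugacy $u\le v+xy$ converts the dual upper bound into a quadratic upper bound for $u$ with the same linear part, so the two bounds pinch $u$ and force differentiability at the single point $(x,0)$. You should replace your ``continuity of partials'' step by this two-sided quadratic pinch with joint test perturbations.
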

\textcolor{black}{Note that, the key formula in Theorem \ref{mainThm1} is the expression for $u_\varepsilon(x,0).$
In the case when $\hat Y(y,0)$ is a uniformly integrable martingale itself, this process is often used to define a new measure ${\hat {\mathbb Q}(y)}$ via $\frac{d{\hat {\mathbb Q}(y)}}{d\mathbb P}:= \frac{\hat Y(y,0)}{y}$. Then, the first-order derivatives in $\varepsilon$ can be restated as 
$$u_\varepsilon(x,0) =v_\varepsilon(y,0)= y\mathbb E^{\hat{\mathbb Q}(y)}\left[ \hat X_T(x,0)\bar R_T\right].$$}

For a given $x>0$, in order to obtain the second-order terms in quadratic expansions of the value functions, we introduce $\mathbf H^2_0(\mathbb R(x))$, the space of square integrable martingales under $\mathbb R(x)$ that start at $0$. We recall that $S^{X(x, 0)}$ was defined in Assumption \ref{sigmaBoundedness} and, with $y= u_x(x,0)$,  we set
\begin{displaymath}
\begin{split}
\mathcal M^2(x, 0) &:= \left\{M\in\mathbf H_0^2(\mathbb R(x)):  M = H\cdot S^{\widehat X(x, 0)} \right\},\\
\mathcal N^2(y, 0) &:= \left\{N\in\mathbf H_0^2(\mathbb R(x)):  MN~{\rm is}~\mathbb R(x)-{\rm martingale~for~every}~M\in\mathcal M^2(x, 0)\right\}.
\end{split}
\end{displaymath}
\subsection*{Auxiliary optimization problems} 
Recalling that $A$ and $B$ denote the  the relative risk aversion  and the relative risk tolerance of $U$, respectively,  
following \cite{KS2006}, for a fixed $x>0$ (with $y = u_x(x,0)$), we set 
\begin{equation}\label{axxC}
a(x,x) := \inf\limits_{M \in\mathcal M^2(x, 0)}\mathbb E^{\mathbb R(x)}\left[ A(\widehat X_T(x,0))(1 + M_T)^2\right],
\end{equation}
\begin{equation}\label{byyC}
b(y,y) := \inf\limits_{N \in\mathcal N^2(y, 0)}\mathbb E^{\mathbb R(x)}\left[ B(\widehat Y_T(y,0))(1 + N_T)^2\right].
\end{equation}
 We refer to \cite{KS2006} for the details behind the derivation of \eqref{axxC} and \eqref{byyC}. Note that \eqref{axxC} and \eqref{byyC} govern the second-order derivatives of $u$  in $x$ and $v$ in $y$, respectively. 
\begin{rem}
Existence and uniqueness of a solution to every quadratic minimization problem  in this paper follows from the closedness of its domain (in the appropriate sense), convexity of the objective, and Komlos' lemma, see  \cite[Lemma 2]{KS2006}.
\end{rem}

Let us also set
\begin{equation}\label{defFG}
F:= \bar R_T\quad {\rm and }\quad  G:= [{\bar R}, {\bar R}]_T.
\end{equation}
We consider the following minimization problems:
\begin{equation}\label{addC}
\begin{array}{rcl}
a(\varepsilon, \varepsilon) &:=& \inf\limits_{M\in\mathcal M^2(x, 0)}\mathbb E^{\mathbb R(x)}\left[A(\widehat X_T(x,0))(M_T + xF)^2 - 
2xFM_T - x^2(F^2 + G) \right],\\ 
\end{array}
\end{equation}
\begin{equation}\label{bddC}
\begin{array}{rcl}
b(\varepsilon, \varepsilon) &:=& \inf\limits_{N\in\mathcal N^2(y, 0)}\mathbb E^{\mathbb R(x)}\left[B(\widehat Y_T(y,0))(N_T - yF)^2 + 
2yFN_T - y^2(F^2 - G) \right].\\ 
\end{array}
\end{equation}
\textcolor{black}{Quadratic minimization problems \eqref{addC} and \eqref{bddC} govern the second-order correction terms associated with perturbations in $\varepsilon$ in the expansion for $u$ and $v$, where the exact structure is given through Theorem \ref{mainThm2}.} 
Let $M^1(x,0)$ and $N^1(y,0)$ designate the unique optimizers to \eqref{addC} and \eqref{bddC} respectively. Then, we define
\begin{equation}\label{axdC}
\begin{split}
a(x, \varepsilon) := \mathbb E^{\mathbb R(x)}&\left[- xF(1 + M^0_T(x,0))\right.\\
&\left.+ A(\widehat X_T(x,0))(xF + M^1_T(x,0))(1 + M^0_T(x,0)) \right],
\end{split}
\end{equation}
\begin{equation}\label{bydC}\begin{split}
b(y, \varepsilon) := \mathbb E^{\mathbb R(x)}&\left[yF(1 +N^0_T(y,0)) \right.\\
&\left.+ B(\widehat Y_T(y,0))(-yF + N^1_T(y,0))(1 + N^0_T(y,0)) \right].\end{split}
\end{equation}
Theorem  \ref{mainThm2} contains the quadratic expansions of the value functions. 
\begin{thm}\label{mainThm2}
Let $x>0$ be fixed.  Assume all  conditions of Theorem \ref{mainThm1} hold,  with $y = u_x(x,0)$.  
 Let us define 
\begin{equation}\label{12136C}
H_u(x,0) :=  -\frac{y}{x}\begin{pmatrix} 
      a(x,x) 		& a(x, \varepsilon)\\ 
  a(x, \varepsilon)	& a(\varepsilon, \varepsilon)\\ 
\end{pmatrix},
\end{equation}
where $a(x,x)$, $a(\varepsilon, \varepsilon)$, and $a(x, \varepsilon)$ are specified in \eqref{axxC}, \eqref{addC}, and \eqref{axdC},  and, respectively,
\begin{equation}\nonumber
H_v(y,0) := \frac{x}{y}\begin{pmatrix} 
      b(y,y) 		& b(y, \varepsilon)\\ 
  b(y, \varepsilon)	& b(\varepsilon, \varepsilon)\\ 
\end{pmatrix},
\end{equation}
where  $b(y,y)$, $b(\varepsilon, \varepsilon)$, $b(y, \varepsilon)$ are defined in \eqref{byyC}, \eqref{bddC}, and \eqref{bydC}.
Then, the   second-order expansions  around $(x,0)$ of $u$ is given by 
\begin{equation}\nonumber
u(x+\Delta  x,\varepsilon) = u(x,0) + (\Delta  x\quad \varepsilon) \nabla u(x,0) + \tfrac{1}{2}(\Delta  x\quad \varepsilon) 
H_u(x,0)
\begin{pmatrix}
\Delta  x\\
\varepsilon\\
\end{pmatrix} + o(\Delta  x^2 + \varepsilon^2),
\end{equation}
likewise, the  quadratic expansion  around $(y,0)$ of $v$ is 
\begin{equation}\nonumber
v(y+\Delta y,\varepsilon) = v(y,0) + (\Delta  y\quad \varepsilon) \nabla v(y,0) + \tfrac{1}{2}(\Delta  y\quad \varepsilon) 
H_v(y,0)
\begin{pmatrix}
\Delta  y\\
\varepsilon\\
\end{pmatrix} + o(\Delta  y^2 + \varepsilon^2).
\end{equation}\end{thm}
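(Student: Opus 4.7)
The plan is to establish the two expansions jointly by combining an explicit construction of nearly optimal primal and dual candidates (yielding an upper bound on $u$ and a lower bound on $v$) with Fenchel duality (providing the matching reverse bounds). The diagonal $x$-entry $a(x,x)$ of $H_u$ and $y$-entry $b(y,y)$ of $H_v$ are precisely the quantities analyzed in \cite{KS2006}, so those can be imported; the new content concerns the mixed entries $a(x,\varepsilon), b(y,\varepsilon)$ and the pure $\varepsilon$-entries $a(\varepsilon,\varepsilon), b(\varepsilon,\varepsilon)$. Since Theorem \ref{mainThm1} has already delivered joint differentiability and matched the gradients, only the two Hessians require identification.

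For the primal upper bound, given $(\Delta x, \varepsilon)$ I would construct, following section \ref{approxTradingStrategies}, a nearly optimal wealth process built from $\widehat X(x+\Delta x, 0)$ together with corrections driven by the optimizers $M^0(x,0), M^1(x,0)$ of \eqref{addC} and \eqref{axdC}, suitably divided by $N^\varepsilon$. A second-order Taylor expansion of $U$ around $\widehat X_T(x,0)$, combined with the second-order expansion in $\varepsilon$ of the stochastic exponential $N^\varepsilon = \mathcal E(\varepsilon\theta\cdot R)$, whose coefficients involve $F = \bar R_T$ and $G = [\bar R,\bar R]_T$, yields an expression in which the $U''$-coefficient $A(\widehat X_T(x,0))$ multiplies terms in $F$, $G$, $M^0_T$, and $M^1_T$ exactly in the pattern prescribed by $a(x,\varepsilon)$ and $a(\varepsilon,\varepsilon)$. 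A symmetric construction in the dual, using $N^0(y,0), N^1(y,0)$ together with the identification $\mathcal Y(y+\Delta y, \varepsilon) = \mathcal Y(y+\Delta y, 0)\, N^\varepsilon$ from the remark following \eqref{dualDomain}, delivers the dual upper bound.

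Matching reverse inequalities come from the Fenchel pairing: for every admissible $X \in \mathcal X(x+\Delta x, \varepsilon)$ and $Y \in \mathcal Y(y+\Delta y, \varepsilon)$,
\begin{equation}\nonumber
\mathbb{E}[U(X_T)] - (x + \Delta x)(y + \Delta y) \leq \mathbb{E}[V(Y_T)].
\end{equation}
Taking the supremum over $X$ and the infimum over $Y$, and pairing the resulting inequalities with the two explicit upper bounds above, brackets both value functions within their claimed quadratic expansions. Internal consistency of the two Hessians $H_u$ and $H_v$ under the partial Legendre-Fenchel inversion in the first variable with $\varepsilon$ frozen is then verified through a Schur-complement computation, grounded on the identity $a(x,x)b(y,y)=1$ inherited from \cite{KS2006}.

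The chief obstacle is controlling the third- and higher-order remainders uniformly in $(\Delta x, \varepsilon)$. The $\varepsilon^3$ term in the Taylor expansion of $\mathcal E(\varepsilon\theta\cdot R)$ is polynomial in $\bar R_T$ and $[\bar R,\bar R]_T$, and bounding its $\mathbb R(x)$-expectation uniformly in $\varepsilon$ is precisely the role of the exponential moment condition of Assumption \ref{integrabilityAssumption}; Example \ref{8231} in section \ref{secCounterexamples} confirms this hypothesis cannot be relaxed. Well-posedness of the quadratic minimizations \eqref{addC}--\eqref{bydC} is handled through Koml\'os-type compactness in $\mathbf H^2_0(\mathbb R(x))$, underpinned by Assumption \ref{sigmaBoundedness}, while transfers between $\mathbb P$ and $\mathbb R(x)$ are executed through the density \eqref{measureR}.
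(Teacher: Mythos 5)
Your overall architecture coincides with the paper's: one-sided bounds on $u$ and $v$ from explicit primal and dual candidates built from the minimizers of \eqref{axxC}--\eqref{bydC} (packaged in the paper as Lemma \ref{12132} and its dual analogue), then a Fenchel sandwich to upgrade the one-sided bounds to a genuine second-order expansion. Two issues, one cosmetic and one substantive.

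The cosmetic one: you write that the nearly optimal candidates yield ``an upper bound on $u$ and a lower bound on $v$.'' This is reversed -- any admissible $X\in\mathcal X(x+\Delta x,\varepsilon)$ gives a \emph{lower} bound on the supremum $u(x+\Delta x,\varepsilon)$, and any admissible $Y\in\mathcal Y(y+\Delta y,\varepsilon)$ gives an \emph{upper} bound on the infimum $v(y+\Delta y,\varepsilon)$. The rest of your argument (pairing the candidate bounds with $u\leq v+xy$) presupposes the correct orientation, so this reads as a slip of the pen, but it should be fixed.

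The substantive issue concerns the step where you close the sandwich. After obtaining $u\geq u(x,0)+(\Delta x\ \varepsilon)\nabla u+\tfrac12(\Delta x\ \varepsilon)H_u(\Delta x\ \varepsilon)^\top+o(\cdot)$ from primal candidates and the analogous $\leq$ bound for $v$ from dual candidates, Fenchel duality transfers the $v$-bound into an \emph{upper} bound for $u$ whose quadratic form is the partial Legendre transform of $H_v$ in the first variable. For the sandwich to collapse, this transform must reproduce $H_u$, which requires not only $a(x,x)\,b(y,y)=1$ from \cite{KS2006} but also the two genuinely new identities
$a(x,\varepsilon)\,b(y,y)=\tfrac{x}{y}b(y,\varepsilon)$ and $\tfrac{y}{x}a(\varepsilon,\varepsilon)+\tfrac{x}{y}b(\varepsilon,\varepsilon)=a(x,\varepsilon)\,b(y,\varepsilon)$. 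These are precisely the assertions of Theorem \ref{162}, and they are not a consequence of a Schur-complement computation plus the single diagonal identity: they have to be derived from the Euler--Lagrange optimality conditions of the quadratic problems \eqref{addC}, \eqref{bddC} and the orthogonality between $\mathcal M^2(x,0)$ and $\mathcal N^2(y,0)$ (equivalently, that the perturbed primal-dual candidate pair remains a $\mathbb P$-martingale, as recorded in the final assertion of Theorem \ref{162}). Your proposal treats this as bookkeeping; in fact it is the piece of the argument that prevents the upper bound from $v$ and the lower bound on $u$ from being off by a strictly positive quadratic form. The paper handles this by the line-by-line adaptation of \cite[Theorem 5.7]{MostovyiSirbuModel}; your proof needs the corresponding derivation (or an explicit appeal to Theorem \ref{162}) before the expansions can be declared sharp. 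Everything else -- the role of Assumption \ref{integrabilityAssumption} in controlling the cubic remainders, Koml\'os compactness via Assumption \ref{sigmaBoundedness}, the transfer to $\mathbb R(x)$ through \eqref{measureR} -- matches the paper.
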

\begin{rem}\label{rem:415} Similarly to \cite{MostovyiSirbuModel}, slightly abusing the language and without necessarily having twice differentiability of $u$ and $v$, we call by $H_u(x,0)$ and $H_v(y,0)$ their respective Hessians. 
\end{rem}
Theorem \ref{162} gives a relationship between the auxiliary value functions as well as between the optimizers to auxiliary minimization problems \eqref{axxC}, \eqref{byyC}, \eqref{addC}, and \eqref{bddC}.
\begin{thm}\label{162}
Let $x>0$ be fixed, the assumptions of Theorem \ref{mainThm1} hold, and $y = u_x(x,0)$. Then, the auxiliary value functions satisfy
\begin{equation}\nonumber
\begin{pmatrix}
a(x,x) & 0\\
a(x, \varepsilon) &-\frac{x}{y} \\
\end{pmatrix}
\begin{pmatrix}
b(y,y) & 0\\
b(y, \varepsilon) & -\frac{y}{x} \\
\end{pmatrix} = 
\begin{pmatrix}
1 & 0\\
0 & 1 \\
\end{pmatrix}
\end{equation}
and
\begin{equation}\nonumber
\frac{y}{x}a(\varepsilon, \varepsilon) + \frac{x}{y}b(\varepsilon, \varepsilon) = a(x, \varepsilon)b(y, \varepsilon).
\end{equation}
The minimizers to 
auxiliary minimization problems \eqref{axxC}, \eqref{addC}, \eqref{byyC}, and \eqref{bddC} 
are related via the following formulas:
\begin{equation}\nonumber
\begin{split}
&\begin{pmatrix}
a(x,x) & 0 \\
a(x, \varepsilon) & -\frac{x}{y} 
\end{pmatrix}
\begin{pmatrix}
N^0_T(y,0) + 1\\
N^1_T(y,0) - yF
\end{pmatrix}= 
A\left(\widehat X_T(x,0)\right)
\begin{pmatrix}
M^0_T(x,0) + 1\\
M^1_T(x,0) + xF
\end{pmatrix},
\end{split}
\end{equation}
\begin{equation}\nonumber\begin{split}
&\begin{pmatrix}
b(y,y)& 0\\b(y, \varepsilon) &- \frac{y}{x}\end{pmatrix}
\begin{pmatrix}
1 + M^0_T(x,0) \\ xF + M^1_T(x,0)\\
\end{pmatrix} 
= 
B(\widehat Y_T(y,0))
\begin{pmatrix} 1 + N^0_T(y,0) \\ -yF + N^1_T(y,0)\\\end{pmatrix} .
\end{split}\end{equation}
Moreover, the product of any process in $\{\widehat X(x,0)M^0(x,0), \widehat X(x,0)M^1(x,0), \widehat X(x,0)\}$ and in $\{\widehat Y(y,0)N^0(y,0), \widehat Y(y,0)N^1(y,0), \widehat Y(y,0)\}$ is a $\mathbb P$-martingale.
\end{thm}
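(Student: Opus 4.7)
The whole argument rests on the pointwise identity $A(\widehat X_T(x,0))\,B(\widehat Y_T(y,0))=1$, immediate from $U''V''=-1$ at $y=U'(x)$ and $\widehat Y_T(y,0)=U'(\widehat X_T(x,0))$, together with the Euler--Lagrange conditions for the four quadratic problems \eqref{axxC}, \eqref{byyC}, \eqref{addC}, \eqref{bddC}. The sigma-boundedness of $S^{\widehat X(x,0)}$ and \cite[Lemma~2]{KS2006} make $\mathcal M^2(x,0)$ an $L^2(\mathbb R(x))$-closed subspace of $\mathbf H_0^2(\mathbb R(x))$ with orthogonal complement $\mathcal N^2(y,0)$, so any element of $L^2(\mathbb R(x))$ orthogonal to $\{M_T:M\in\mathcal M^2(x,0)\}$ admits a decomposition $c+N_T$ with $c\in\mathbb R$ and $N\in\mathcal N^2(y,0)$, and symmetrically.

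I would first handle the top rows. The first-order condition for $M^0(x,0)$ reads $\mathbb E^{\mathbb R(x)}[A(\widehat X_T)(1+M^0_T)M_T]=0$ for every $M\in\mathcal M^2(x,0)$; testing against $M^0$ itself identifies the scalar part as $a(x,x)$, so $A(\widehat X_T)(1+M^0_T)=a(x,x)(1+\widetilde N_T)$ for a unique $\widetilde N\in\mathcal N^2(y,0)$. Multiplying through by $B(\widehat Y_T)$ and using $AB=1$ converts the Euler condition of \eqref{axxC} into that of \eqref{byyC}, so strict convexity forces $\widetilde N=N^0(y,0)$; pairing with $1+N^0_T$ then yields $b(y,y)=1/a(x,x)$. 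For the bottom rows, the first-order condition for $M^1(x,0)$ gives $A(\widehat X_T)(M^1_T+xF)-xF=a(x,\varepsilon)+\widetilde N^{**}_T$ with $\widetilde N^{**}\in\mathcal N^2(y,0)$ (the constant pinned by \eqref{axdC} via orthogonality with $M^0$). Guided by the target identity, set
\[
\widetilde N^1_T:=\tfrac{y}{x}\bigl(a(x,\varepsilon)N^0_T(y,0)-\widetilde N^{**}_T\bigr)\in\mathcal N^2(y,0),
\]
and substitute into $B(\widehat Y_T)(\widetilde N^1_T-yF)+yF$: using $AB=1$ to replace $B\cdot A(\widehat X_T)(M^1_T+xF)$ by $(M^1_T+xF)$, and the top-row identity to replace $B(\widehat Y_T)(1+N^0_T)$ by $b(y,y)(1+M^0_T)$, all $F$ and $B$ terms cancel and the expression collapses to
\[
\tfrac{y}{x}a(x,\varepsilon)b(y,y)+\widetilde M_T,\qquad \widetilde M_T=\tfrac{y}{x}\bigl(a(x,\varepsilon)b(y,y)M^0_T-M^1_T\bigr)\in\mathcal M^2(x,0).
\]
Being in $\mathbb R\oplus\mathcal M^2(x,0)$, this is orthogonal to $\mathcal N^2(y,0)$, so $\widetilde N^1$ satisfies the Euler condition of \eqref{bddC} and by strict convexity equals $N^1(y,0)$. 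Reading off the constant gives $b(y,\varepsilon)=\tfrac{y}{x}a(x,\varepsilon)b(y,y)$, completing the matrix identity for $a,b$; reading off the $\mathcal M^2$-part identifies $\widetilde M_T=b(y,\varepsilon)M^0_T-(y/x)M^1_T$, which together with the already identified $\widetilde N^{**}_T=a(x,\varepsilon)N^0_T-(x/y)N^1_T$ supplies both optimizer matrix identities.

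For the scalar identity I substitute the optimizer representations from the previous paragraph into \eqref{addC} and \eqref{bddC}; after bilinear expansion the terms involving $\mathbb E^{\mathbb R(x)}[F M^1_T]$, $\mathbb E^{\mathbb R(x)}[F N^1_T]$, and $\mathbb E^{\mathbb R(x)}[G]$ cancel between the two summands, and using the auxiliary relation $a(x,\varepsilon)=xa(x,x)\mathbb E^{\mathbb R(x)}[F(1+N^0_T)]-x\mathbb E^{\mathbb R(x)}[F(1+M^0_T)]$ (obtained from \eqref{axdC} by rewriting $A(\widehat X_T)(1+M^0_T)$ via the top-row identity) together with $a(x,x)b(y,y)=1$ reduces $\tfrac{y}{x}a(\varepsilon,\varepsilon)+\tfrac{x}{y}b(\varepsilon,\varepsilon)$ exactly to $a(x,\varepsilon)b(y,\varepsilon)$. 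The $\mathbb P$-martingale assertion follows from the fact that $\widehat X(x,0)\widehat Y(y,0)/(xy)$ is, by assumption, a uniformly integrable $\mathbb P$-martingale, hence is the density process of $\mathbb R(x)$ with respect to $\mathbb P$; any $\mathbb R(x)$-martingale multiplied by this density is a $\mathbb P$-martingale, and applying this to each of $1$, $M^i(x,0)$, $N^j(y,0)$, and $M^i N^j$ (the latter being $\mathbb R(x)$-martingales by the definition of $\mathcal N^2(y,0)$) yields the nine $\mathbb P$-martingales $\widehat X(x,0)M^i\cdot\widehat Y(y,0)N^j$. The delicate step is the bilinear computation for the bottom rows: one has to guess the linear combination $\widetilde N^1$ correctly, and the collapse to $\mathbb R\oplus\mathcal M^2(x,0)$ is exact only through simultaneous use of $AB=1$ and the top-row identity.
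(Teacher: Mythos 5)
Your proposal is correct and takes essentially the same approach as the paper's proof, which the author defers to a line-by-line adaptation of \cite[Theorem~5.7]{MostovyiSirbuModel}. The ingredients you deploy --- the pointwise conjugacy $A(\widehat X_T(x,0))B(\widehat Y_T(y,0))=1$, the Euler--Lagrange conditions for the four quadratic problems, the $L^2(\mathbb R(x))$-orthogonal decomposition into constants plus $\mathcal M^2(x,0)$ plus $\mathcal N^2(y,0)$, and the Bayes-rule argument turning $\mathbb R(x)$-martingales into $\mathbb P$-martingales via the density $\widehat X(x,0)\widehat Y(y,0)/(xy)$ --- are exactly those underlying the referenced proof.
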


Theorem \ref{161} gives the derivatives of the optimizers via solutions to auxiliary minimization problems  \eqref{axxC}, \eqref{byyC}, \eqref{addC}, and \eqref{bddC}. 
\begin{thm}\label{161}

Let us suppose that $x>0$ is  fixed and the assumptions of Theorem \ref{mainThm1} hold.   Then, with 
\begin{displaymath}
\begin{split}
X^{x}_T(x, 0)&:= \frac{\widehat X_T(x,0)}{x} (1 + M^0_T(x,0))\quad and \quad  
X^{\varepsilon}_T(x,0):=  \frac{\widehat X_T(x,0)}{x} (xF + M^1_T(x,0) ),
\end{split}
\end{displaymath}
we have
\begin{equation}\label{12511}
\begin{split}
\lim\limits_{|\Delta  x| + |\varepsilon| \to 0}\frac{1}{|\Delta  x| + |\varepsilon|}\left|
\widehat X_T(x + \Delta  x, \varepsilon) -\widehat X_T(x,0)- \Delta  x X^{x}_T(x,0) -\varepsilon X^{\varepsilon}_T(x,0)\right|&= 0.
\end{split}
\end{equation}
Likewise, denoting $y = u_x(x,0)$ and  with 
\begin{displaymath}
\begin{split}
Y^{y}_T(y,0):= 
\frac{\widehat Y_T(y,0)}{y}(1 + N^0_T(y,0) )\quad and \quad 
Y^{\varepsilon}_T(y,0)&:= -
\frac{\widehat Y_T(y,0)}{y}(yF-N^1_T(y,0)),
\end{split}
\end{displaymath}
we have 
\begin{equation}\label{1252}
\begin{split}
\lim\limits_{|\Delta  y| + |\varepsilon |\to 0}\frac{1}{|\Delta  y| + |\varepsilon|}\left|
\widehat Y_T(y + \Delta  y, \varepsilon) - \widehat Y_T(y,0)-\Delta  y Y^{y}_T(y,0)-\varepsilon Y^{\varepsilon}_T(y,0)\right|&= 0,
\end{split}
\end{equation}
In both \eqref{12511} and \eqref{1252}, the convergence is in $\mathbb P$-probability.
\end{thm}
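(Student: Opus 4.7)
The strategy is to combine the quadratic value-function expansion of Theorem~\ref{mainThm2} with the uniform concavity $A\geq c_1$ from Assumption~\ref{rra}, following the scheme introduced in \cite{KS2006} for the pure initial-wealth perturbation and adapted in \cite{MostovyiSirbuModel} to joint perturbations. I address the primal assertion \eqref{12511} first; the dual assertion \eqref{1252} will follow by an entirely analogous argument applied to $V$ (whose relative risk-tolerance $B$ is likewise bounded between $1/c_2$ and $1/c_1$) together with the reparametrization $\Delta y\leftrightarrow(\Delta x,\varepsilon)$ through the relations established in Theorem~\ref{162}.

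\textbf{Step 1 (Candidate matching the expansion).} Introduce the first-order candidate
$$\widetilde X^{\Delta x,\varepsilon}_T:=\widehat X_T(x,0)+\Delta x\,X^{x}_T(x,0)+\varepsilon\,X^{\varepsilon}_T(x,0),$$
and, using the near-optimal wealth processes constructed in Section~\ref{approxTradingStrategies}, realize $\widetilde X^{\Delta x,\varepsilon}_T$ as the time-$T$ value of an admissible element of $\mathcal X(x+\Delta x,\varepsilon)$ up to an $o(|\Delta x|+|\varepsilon|)$ correction. A Taylor expansion of $U$ around $\widehat X_T(x,0)$, together with the identity $U'(\widehat X_T(x,0))=\widehat Y_T(y,0)$, the martingale relations of Theorem~\ref{162}, and the characterizations of $M^0(x,0),M^1(x,0)$ as the minimizers of \eqref{axxC}--\eqref{addC}, yields
$$\mathbb E\bigl[U(\widetilde X^{\Delta x,\varepsilon}_T)\bigr]=u(x,0)+y\Delta x+u_\varepsilon(x,0)\varepsilon+\tfrac{1}{2}(\Delta x\ \ \varepsilon)\,H_u(x,0)\begin{pmatrix}\Delta x\\ \varepsilon\end{pmatrix}+o(\Delta x^2+\varepsilon^2).$$

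\textbf{Step 2 (Convexity squeeze).} From $A\geq c_1$ one derives, for every $\xi>0$, the pointwise quadratic lower bound
$$U(\xi)\leq U(\widetilde X^{\Delta x,\varepsilon}_T)+U'(\widetilde X^{\Delta x,\varepsilon}_T)(\xi-\widetilde X^{\Delta x,\varepsilon}_T)-\frac{c_1}{2}\,U'(\widetilde X^{\Delta x,\varepsilon}_T)\,\frac{(\xi-\widetilde X^{\Delta x,\varepsilon}_T)^2}{\xi\vee\widetilde X^{\Delta x,\varepsilon}_T}.$$
Applying this with $\xi=\widehat X_T(x+\Delta x,\varepsilon)$, taking expectations, and then subtracting the Step~1 expansion from the expansion of $u(x+\Delta x,\varepsilon)$ in Theorem~\ref{mainThm2}, the two quadratic right-hand sides cancel. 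The first-order Lagrangian term $\mathbb E[U'(\widetilde X^{\Delta x,\varepsilon}_T)(\widehat X_T(x+\Delta x,\varepsilon)-\widetilde X^{\Delta x,\varepsilon}_T)]$ is seen to be $o(\Delta x^2+\varepsilon^2)$ via an envelope argument driven by Theorem~\ref{mainThm1} and the supermartingale property of $\widehat Y(y,0)X$ for $X\in\mathcal X$. What remains forces
$$\mathbb E^{\mathbb R(x)}\!\left[\left(\frac{\widehat X_T(x+\Delta x,\varepsilon)-\widetilde X^{\Delta x,\varepsilon}_T}{\widehat X_T(x,0)}\right)^{\!2}\right]=o(\Delta x^2+\varepsilon^2),$$
an $L^2(\mathbb R(x))$ estimate which is stronger than, and hence implies, the $\mathbb P$-probability convergence in \eqref{12511}. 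Rerunning Steps~1 and~2 on the dual side with $(V,B,\mathcal N^2(y,0))$ replacing $(U,A,\mathcal M^2(x,0))$ and $(N^0,N^1)$ from \eqref{byyC} and \eqref{bddC} in place of $(M^0,M^1)$, the resulting candidate $\widetilde Y^{\Delta y,\varepsilon}_T=\widehat Y_T(y,0)+\Delta y\,Y^{y}_T(y,0)+\varepsilon\,Y^{\varepsilon}_T(y,0)$ matches the quadratic expansion of $v$ and the analogous squeeze delivers \eqref{1252}.

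\textbf{Main obstacle.} The bottleneck is the control of the higher-order Taylor remainder in Step~1 (and its dual counterpart). The process $X^{\varepsilon}_T$ carries the factor $xF=x\bar R_T$, whose integrability is only exponential under $\mathbb R(x)$ via Assumption~\ref{integrabilityAssumption}; meanwhile, $A\leq c_2$ only controls $U'''$ up to polynomial growth in the wealth. Ensuring an $o(\Delta x^2+\varepsilon^2)$ remainder therefore requires combining the exponential moment of $\bar R_T$ and $[\bar R,\bar R]_T$ with $L^p(\mathbb R(x))$-integrability of the auxiliary martingales $M^0(x,0),M^1(x,0)$, the latter being extracted from the $\mathbb P$-martingale structure exposed in the last assertion of Theorem~\ref{162}. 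This is exactly the role that Assumption~\ref{integrabilityAssumption} is designed to play, as witnessed by the counterexample postponed to Section~\ref{secCounterexamples}.
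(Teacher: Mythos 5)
Your proposal follows essentially the same route the paper takes: the paper proves Theorem~\ref{161} by verifying that the admissible sets and perturbation structure here fit the abstract hypotheses of \cite{MostovyiSirbuModel} (via Lemmas~\ref{keyConcreteCharLem} and \ref{12132}) and then invoking \cite[Theorem~5.8]{MostovyiSirbuModel} line by line; what you have written out is precisely the Taylor-plus-convexity-squeeze argument that underlies that cited theorem (which in turn goes back to \cite{KS2006}), so the strategies coincide. One concrete technical imprecision worth flagging: the pointwise quadratic bound you state in Step~2 is not quite right as written. Taylor with integral remainder together with $-U''(t)\geq c_1U'(t)/t$ and the monotonicity of $U'$ gives, for $\xi,\zeta>0$,
\begin{equation}\nonumber
U(\xi)-U(\zeta)-U'(\zeta)(\xi-\zeta)\ \leq\ -\frac{c_1}{2}\,U'(\xi\vee\zeta)\,\frac{(\xi-\zeta)^2}{\xi\vee\zeta},
\end{equation}
i.e.\ the derivative in the quadratic penalty must be evaluated at the \emph{larger} of the two arguments, not at $\zeta=\widetilde X^{\Delta x,\varepsilon}_T$ as you wrote; when $\xi>\zeta$ your version overstates the penalty. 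This is fixable --- you then invoke Lemma~\ref{rraCor} to compare $U'(\xi\vee\zeta)$ with $U'(\widehat X_T(x,0))$ and pass to the $\mathbb R(x)$-weighted form --- but without that correction the claimed $L^2(\mathbb R(x))$ estimate does not follow directly. Finally, note that the additive candidate $\widetilde X^{\Delta x,\varepsilon}_T$ is not a priori positive nor admissible; you do gesture at Section~\ref{approxTradingStrategies}, but the localization/truncation there (the $n=n(\Delta x,\varepsilon)$ construction) is exactly what makes the ``$o$-correction'' rigorous, and it deserves to be made explicit rather than treated as a side remark.
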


\section{Construction of nearly optimal wealth processes}
\label{approxTradingStrategies}

Here $x>0$ will be fixed  ${\widehat{\pi}}$ will denote the optimal proportion invested in stock for $0$-model and initial wealth $x$, i.e., ${\widehat{\pi}}$ satisfies
$$\widehat X(x,0)=x\mathcal E\left({\widehat{\pi}}\cdot R\right),$$
where  $R = (\rho^0,\rho^1,\dots, \rho^d)$, $\rho^0\equiv 0$.  For the results below, we will need a representation of $R$ in terms of its {\it predictable characteristics}. Notation-wise here, we follow \cite{JS}. Thus, we fix the {\it truncation function} $h(x):x\to x1_{\{|x|\leq 1\}}$ and denote by $R^c$ the {\it continuous martingale part} of $R$, by $B$ the {\it predictable finite variation part} of $R$ (corresponding to the truncation function $h$), by $\mu$ the {\it jump measure} of $R$, i.e., a random counting measure on $[0,T]\times \mathbb R^d$ defined by
\begin{equation}\nonumber
\mu\left([0,t]\times E\right) := \sum\limits_{0\leq s\leq t}1_{\{E\backslash\{0\}\}}(\Delta R_s),\quad t\in[0,T],E\subseteq \mathbb R^d,
\end{equation}
where $1_E$ is the indicator function of a set $E$, by $\nu$ we denote the {\it predictable compensator} of $\mu$, i.e., a {\it predictable random measure} on $[0,T]\times \mathbb R^d$, such that, in particular, $\left(x1_{\{|x|\leq 1\}}\right)*(\mu - \nu)$ is a {\it purely discontinuous local martingale}. Setting the {\it quadratic covariation process} $C:= [R^c,R^c]$ of $R^c$, we call $(B, C,\eta)$ the {\it triplet of predictable characteristics} of $R$ (associated with the truncation function $h$). 

 It it well-known (see for example \cite{JS}), that semimartingale $R$ can be represented in terms of $(B, C,\eta)$ as
\begin{equation}\nonumber
R = R^c + B + \left(x1_{\{|x|\leq 1\}}\right)*(\mu - \nu) + \left(x1_{\{|x|>1\}}\right)*\mu.
\end{equation}
Note that predictable characteristics $(B, C, \nu)$ are unique up to a $\mathbb P$-null set.
Moreover, let us define a predictable scalar-valued locally integrable increasing process process $A$ as
\begin{equation}\nonumber
A:= \sum\limits_{i \leq d} Var(B^i)+ \sum\limits_{i\leq d}C^{i,i} + (\min(1, |x|^2))*\nu, 
\end{equation}
where $Var(B^i)$ denotes the variation process of $B^i$, $i = 1,\dots,d$. Then $B$, $C$, and $\nu$ are absolutely continuous with respect to $A$, therefore 
$$B = b\cdot A,\quad C = c\cdot A,\quad {\rm and}\quad \nu = \eta\cdot A,$$
where $b$ is a predictable $\mathbb R^d$-valued process, $c$ is a predictable process with values in the set of nonnegative-definite matrices, and $\nu$ is a predictable Levy-measure-valued process.
Let us define a vector-valued process $R^{\{\widehat{\pi}\}}$ as

\begin{equation}\label{8121}
\begin{array}{rcl}
R^{\{\widehat{\pi}\}} &:=& R - (c{\widehat{\pi}})\cdot A - \left( \frac{\widehat {\pi}^{\top} x}{1+\widehat {\pi}^{\top}x }x\right)*\mu.\\
\end{array}
\end{equation} 
\textcolor{black}{Note that, the process $R^{\{\widehat{\pi}\}}$ governs the return of the traded assets under the numeraire $\frac{\widehat X(x,0)}{x}=\mathcal E\left({\widehat{\pi}}\cdot R\right)$.}
Here end below superscript $\top$ denotes the transpose of a vector.
Also note that $R^{\{\widehat{\pi}\}}$ is a semimartingale as 
$$\sum\limits_{i=0}^d\sum\limits_{s\leq \cdot}\left(\tfrac{{\widehat{\pi}}^{\top}_s \Delta R_s}{1 + {\widehat{\pi}}^{\top}_s\Delta R_s}\right)^2(\Delta \rho^i_s)^2<\infty.$$

Let $\mathcal M^\infty(x)$ denote the set of uniformly bounded elements of $\mathcal M^2(x)$. 
\begin{lem}\label{8111}
Let us assume that the assumptions of 
Theorem \ref{mainThm1} hold.
Then every element of $\mathcal M^\infty(x)$ be represented as a stochastic integral with respect to $R^{\{\widehat{\pi}\}}$.
\end{lem}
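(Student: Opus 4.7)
The plan is to establish an explicit stochastic-differential link between the discounted assets $S^{\widehat X(x,0)}$ and the process $R^{\{\widehat\pi\}}$ of \eqref{8121}, and then to pass the desired representation through that link. Writing $\widehat X(x,0) = x\mathcal E(\widehat\pi\cdot R)$, I would apply It\^o's formula (or integration by parts) to each component $S^{\widehat X(x,0),i} = x\mathcal E(\rho^i)/\widehat X(x,0)$, $i\in\{0,1,\dots,d\}$, with the convention $\rho^0\equiv 0$. The continuous-covariation corrections produce the drift $(\widehat\pi^\top c\widehat\pi-(c\widehat\pi)^i)\cdot A$, while the pure-jump corrections produce $\sum_s(\widehat\pi^\top\Delta R_s)(\widehat\pi^\top\Delta R_s-\Delta\rho^i_s)/(1+\widehat\pi^\top\Delta R_s)$. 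After regrouping, these coincide exactly with the compensators subtracted from $R^i$ in the definition \eqref{8121} of $R^{\{\widehat\pi\}}$, leading to the clean identity
\[
dS^{\widehat X(x,0),i} \;=\; S^{\widehat X(x,0),i}_-\sum_{j=0}^{d}(\delta^{ij}-\widehat\pi^{j})\,dR^{\{\widehat\pi\},j},\qquad i=0,1,\dots,d,
\]
where $\delta^{ij}$ denotes the Kronecker symbol. The bookkeeping is driven by the key algebraic identity $\sum_j \widehat\pi^j\,dR^{\{\widehat\pi\},j}=\widehat\pi^\top dR-\widehat\pi^\top c\widehat\pi\,dA-\sum_s(\widehat\pi^\top\Delta R_s)^2/(1+\widehat\pi^\top\Delta R_s)$, which ties the It\^o-calculus and characteristic-based expressions together. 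Note that the $i=0$ case is consistent with $R^{\{\widehat\pi\},0}\equiv 0$, since then $d(x/\widehat X)/(x/\widehat X)_-$ equals $-\widehat\pi^\top dR^{\{\widehat\pi\}} = \sum_j(\delta^{0j}-\widehat\pi^j)\,dR^{\{\widehat\pi\},j}$.

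Given this identity, the lemma is immediate. Any $M\in\mathcal M^{\infty}(x)$ is of the form $M=H\cdot S^{\widehat X(x,0)}$ for some $S^{\widehat X(x,0)}$-integrable predictable $H=(H^0,\dots,H^d)$; substituting and using associativity of the stochastic integral yields
\[
M=K\cdot R^{\{\widehat\pi\}},\qquad K^{j}:=H^{j}S^{\widehat X(x,0),j}_{-}-\widehat\pi^{j}\sum_{i=0}^{d} H^{i}S^{\widehat X(x,0),i}_{-},
\]
which is the required representation. The $R^{\{\widehat\pi\}}$-integrability of $K$ is automatic from associativity together with the existence of $H\cdot S^{\widehat X(x,0)}$ as a semimartingale, since the ``inner'' integrals $(\delta^{ij}-\widehat\pi^j)\cdot R^{\{\widehat\pi\},j}$ are already well defined as the components of $S^{\widehat X(x,0)}$.

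The main obstacle is the It\^o computation underlying the first step: one must verify that the finite-variation and pure-jump compensators arising from differentiating $\mathcal E(\rho^i)/\mathcal E(\widehat\pi\cdot R)$ reassemble precisely into the characteristic-based corrections that define $R^{\{\widehat\pi\}}$, with special care for the $i=0$ component (where $R^0\equiv 0$ but $S^{\widehat X(x,0),0}$ is nontrivial). The positivity of $\widehat X(x,0)$, inherent in its being a wealth process, guarantees $1+\widehat\pi^\top\Delta R>0$, so the jump sums converge absolutely and the inversion $\mathcal E(\widehat\pi\cdot R)^{-1}$ is well defined; this is what keeps the calculation free of integrability subtleties.
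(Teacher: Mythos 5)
Your proof is correct and the core identity you invoke is the same one the paper relies on, but you arrive at the result by a genuinely different route. The paper adds a constant $C'>0$ so that $C'+M$ is a \emph{strictly positive wealth process} in the model discounted by $\widehat X(x,0)/x$, then writes $C'+M = C'\mathcal E(\tilde\pi\cdot R)/\mathcal E(\widehat\pi\cdot R)$ for a single proportion process $\tilde\pi$, invokes the ratio-of-exponentials formula $\mathcal E(\tilde\pi\cdot R)/\mathcal E(\widehat\pi\cdot R) = \mathcal E\bigl((\tilde\pi-\widehat\pi)\cdot R^{\{\widehat\pi\}}\bigr)$ (essentially \cite[Lemma 3.4]{KarKar07}, which is what your It\^o computation rederives), and finally reads off the integrand from the stochastic exponential's SDE. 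You instead apply the same ratio identity componentwise, taking $\tilde\pi=e^i$ for each $i$ to get the SDE $dS^{\widehat X(x,0),i}=S^{\widehat X(x,0),i}_-(e^i-\widehat\pi)^\top\,dR^{\{\widehat\pi\}}$, and then push $H$ through via associativity of stochastic integrals. Your route is more explicit—it gives a clean closed-form integrand $K$ in terms of $H$, $S^{\widehat X(x,0)}_-$, and $\widehat\pi$ directly—whereas the paper's route, by passing through a single positive process, sidesteps the one delicate point in yours: the step where you claim that $H$ being $S^{\widehat X(x,0)}$-integrable (in the sense of vector stochastic integrals) forces $K=\sum_i H^iS^{\widehat X(x,0),i}_-(e^i-\widehat\pi)$ to be $R^{\{\widehat\pi\}}$-integrable with $H\cdot S^{\widehat X(x,0)}=K\cdot R^{\{\widehat\pi\}}$. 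This ``chain rule'' for vector stochastic integrals is true, but it is a nontrivial theorem (see Cherny--Shiryaev, Vector Stochastic Integrals and the Fundamental Theorems of Asset Pricing, or Jacod's treatment of vector integrals), and you should cite it rather than label it ``automatic,'' since vector integrability is not defined componentwise. With that reference supplied, your argument is complete.
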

\begin{proof}
Let $M\in\mathcal M^\infty(x)$.
Then for a sufficiently large constant $C'>0$, we have

\begin{equation}\label{8281}
0<C' + M = C' + H\cdot S^X = 
\frac{C'\mathcal E\left(\tilde {\pi}\cdot R \right)}{\mathcal E\left( {\widehat{\pi}}\cdot R\right)},
\end{equation}
for some predictable and $R$-integrable process $\tilde \pi$.
First, as $\Delta (\widehat \pi \cdot R)>-1$, we have 
$$\frac{\mathcal E\left(\tilde \pi\cdot R \right)}{\mathcal E\left({\widehat{\pi}}\cdot R \right)} = \mathcal E(D),$$
where
$$D = \tilde \pi\cdot R - {\widehat{\pi}}\cdot R - [(\tilde \pi\cdot R)^c - ({\widehat{\pi}}\cdot R)^c, ({\widehat{\pi}}\cdot R)^c]-
\sum\limits_{t\leq \cdot}\left(\Delta (\tilde \pi\cdot R_t - {\widehat{\pi}}\cdot R_t)\frac{\Delta {\widehat{\pi}}\cdot R_t}{1 + \Delta {\widehat{\pi}}\cdot R_t}\right),$$
which is a (well-defined) semimartingale in view of finiteness of $\sum\limits_{t\leq \cdot} (\Delta \tilde \pi\cdot R_t)^2$ and $\sum\limits_{t\leq \cdot} (\Delta\widehat \pi\cdot R_t)^2$, see \cite[Lemma 3.4]{KarKar07}. 
Therefore, we can restate 
$\frac{\mathcal E\left(\tilde \pi\cdot R \right)}{\mathcal E\left({\widehat{\pi}}\cdot R \right)}$ as
\begin{equation}\label{772}
\frac{\mathcal E\left(\tilde \pi\cdot R \right)}{\mathcal E\left({\widehat{\pi}}\cdot R \right)} = \mathcal E\left((\tilde \pi - {\widehat{\pi}})\cdot R^{\{\widehat{\pi}\}}\right).
\end{equation}
Using representation \eqref{772}, in \eqref{8281} we obtain
$$C' + M = C'\mathcal E\left( (\tilde \pi - {\widehat{\pi}})\cdot R^{\{\widehat{\pi}\}}\right) = C'+ C'\left\{\mathcal E\left( (\tilde \pi - {\widehat{\pi}})\cdot R^{\{\widehat{\pi}\}}\right)_{-}(\tilde \pi - {\widehat{\pi}})\right\}\cdot R^{\{\widehat{\pi}\}}.$$
Solving for $M$, we get
$$M = \left\{C'\mathcal E\left( (\tilde \pi - {\widehat{\pi}})\cdot R^{\{\widehat{\pi}\}}\right)_{-}(\tilde \pi - {\widehat{\pi}})\right\}\cdot R^{\{\widehat{\pi}\}},$$
which completes the proof.
\end{proof}

Let $M^0$ and $M^1$ denote the solutions to \eqref{axxC} and \eqref{addC}, respectively. It follows from \cite[Lemma 6]{KS2006} that there exist sequences $(\bar M^{0,n})_{n\geq 1}$ and $(\bar M^{1,n})_{n\geq 1}$ in $\mathcal M^\infty(x)$, such that 
\begin{displaymath}
\lim\limits_{n\to\infty}\bar M^{0,n}_T = M^0_T\quad {\rm and}\quad \lim\limits_{n\to\infty}\bar M^{1,n}_T = M^1_T,\quad \Pas
\end{displaymath}
We suppose that  $\bar M^{0,n}$ is bounded by $n$, $n\geq 1,$ this without loss of generality. Therefore, the jumps of $\bar M^{0,n}$ are bounded by $2n$ and the quadratic variation of $\bar M^{0,n}$ is locally bounded, where 
$$T_k := \inf\left\{t\geq 0:~[\bar M^{0,n}]_t\geq k\right\},\quad k\geq 1,$$
is a localizing sequence for $[\bar M^{0,n}]$. Note that $[\bar M^{0,n}]_{T_k}\leq k + 4n^2$. Let us define
$$\tilde M^{0,n}_t := \bar M^{0,n}_{\min(t, T_n)},\quad t\in[0,T], n\geq 1.$$
Then $\tilde M^{0, n}$ is bounded by $n$, its quadratic variation is bounded $n+4n^2$, and its jumps are bounded by $2n$.   Moreover, by construction we have
$$\lim\limits_{n\to\infty}\tilde M^{0,n}_T = M^0_T,\quad \Pas$$
Analogously, we can construct a sequence $\tilde M^{1,n}$, $n\geq 1$, of martingales under $\mathbb R(x)$, such that $\tilde M^{1,n}$ is bounded by $n$, its quadratic variation is bounded by $n + 4n^2$, and its jumps are bounded by $2n$, $n\geq 1$, and such that 
  $$\lim\limits_{n\to\infty}\tilde M^{1,n}_T = M^1_T,\quad \Pas$$
 
Lemma \ref{8111} implies the existence of predictable $R^{\{\widehat{\pi}\}}$-integrable processes $\gamma^{0,n}$ and $\gamma^{1,n}$, $n\geq 1$, such that 
\begin{equation}\label{81111}
\gamma^{0,n}\cdot R^{\{\widehat{\pi}\}} = \frac{\tilde M^{0,n}}{x},\quad \gamma^{1,n}\cdot R^{\{\widehat{\pi}\}} = \frac{\tilde M^{1,n}}{x},\quad n\geq 1.
\end{equation}
We define the family of processes $(R^{\{\varepsilon\theta\}})_{\varepsilon\in(-\varepsilon_0, \varepsilon_0)}$ as 
\begin{equation}\label{Reps}
\begin{array}{rcl}
R^{\{\varepsilon\theta\}} &:=& R - \varepsilon(c\theta)\cdot A - \varepsilon\left(\frac{\theta^{\top}_sx}{1 +\varepsilon\theta^{\top}_sx}x\right)*\mu,\\
\end{array}
\end{equation}
\textcolor{black}{
where $R^{\{\varepsilon\theta\}}$ governs the returns of the traded assets under $N^\varepsilon$, and} similarly to the verification after \eqref{8121}, one can show that $R^{\{\varepsilon\theta\}}$ is a semimartingale for every $\varepsilon\in(-\varepsilon_0, \varepsilon_0)$.
Finally, let us define the family $\left(\tilde X^{\Delta x, \varepsilon, n}\right)_{(\Delta x, \varepsilon, n)\in(-x,\infty)\times(-\varepsilon_0,\varepsilon_0)\times \mathbb N}$ as
\begin{equation}\label{8122}
\tilde X^{\Delta x, \varepsilon, n} := (x + \Delta x) \mathcal E\left(\left( {\widehat{\pi}} + \Delta x\gamma^{0,n} + \varepsilon(-\theta + \gamma^{1,n}) \right)\cdot R^{\{\varepsilon\theta\}}\right).
\end{equation}

\begin{thm}\label{thmCorOptimizer}
Let $x>0$ be fixed and the assumptions of Theorem \ref{mainThm1} hold. Then we have.
\begin{enumerate}\item
For every $n\in\mathbb N$, there exists $\delta = \delta(n)>0$, such that, $$\tilde X^{\Delta x, \varepsilon, n}\in\mathcal X(x + \Delta x, \varepsilon),\quad (\Delta x, \varepsilon)\in B_{\delta(n)}(0,0),$$
where $B_{\delta(n)}(0,0)$ denotes a ball of radius ${\delta(n)}$ centered at $(0,0)$.
\item
There exists a function $n = n(\Delta x,\varepsilon):(-x,\infty)\times(-\varepsilon_0,\varepsilon_0)\to\mathbb N$, such that \begin{equation}\label{1121}
\mathbb E\left[U\left(\tilde X^{\Delta x, \varepsilon, n(\Delta x,\varepsilon)}_T\right)\right] = u(x + \Delta x, \varepsilon) - o(\Delta x^2 + \varepsilon^2).
\end{equation}
\item\textcolor{black}{The processes $\tilde X^{\Delta x, \varepsilon, n(\Delta x,\varepsilon)}$'s from the previous item have the following proportions invested in the corresponding stocks:
\begin{equation}\label{01134}
\left((1 - \varepsilon)I + \varepsilon \theta\vec 1^{\top}\right)\left(\widehat \pi + \Delta x\gamma^{0,n(\Delta x,\varepsilon)} + \varepsilon(-\theta + \gamma^{1,n(\Delta x,\varepsilon)}) \right),
\end{equation} 
where $I$ is $(d+1)\times(d+1)$ identity matrix and $\theta\vec 1^{\top}$ is the outer product of $\theta$ and the vector, whose every component equals to $1$.}
\end{enumerate}
\end{thm}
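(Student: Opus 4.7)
The plan is to prove the three parts in order, with the second-order utility matching in (2) carrying the main substance.

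For (1), fix $n$. By construction $\tilde M^{0,n}, \tilde M^{1,n}$ are bounded by $n$ with jumps bounded by $2n$ and quadratic variation bounded by $n+4n^2$, and by Lemma \ref{8111} the integrands $\gamma^{0,n}, \gamma^{1,n}$ satisfy $\gamma^{j,n}\cdot R^{\{\widehat\pi\}} = \tilde M^{j,n}/x$. Strict positivity of the stochastic exponential $\mathcal E(\Pi\cdot R^{\{\varepsilon\theta\}})$ with $\Pi := \widehat\pi + \Delta x\gamma^{0,n}+\varepsilon(-\theta+\gamma^{1,n})$ reduces to the jump condition $\Pi^{\top}\Delta R^{\{\varepsilon\theta\}} > -1$. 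At $(\Delta x,\varepsilon)=(0,0)$ this reads $\widehat\pi^{\top}\Delta R > -1$, which holds since $\widehat X(x,0)>0$; Assumption \ref{boundedJumps} gives $|\varepsilon\theta^{\top}\Delta R|\leq 1/2$, so that $\Delta R^{\{\varepsilon\theta\}}=\Delta R/(1+\varepsilon\theta^{\top}\Delta R)$ stays uniformly controlled, and a continuity argument yields the required $\delta(n)>0$. Membership $\tilde X^{\Delta x,\varepsilon,n}\in\mathcal X(x+\Delta x,\varepsilon)$ then follows from the interpretation of $R^{\{\varepsilon\theta\}}$ as the returns of the traded assets under the numeraire $N^\varepsilon$.

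For (2), I would Taylor-expand $U$ around $\widehat X_T(x,0)$ and expand $\tilde X^{\Delta x,\varepsilon,n}_T$ to first order in $(\Delta x,\varepsilon)$. A key algebraic step uses Yor's product formula together with \eqref{Reps} to verify the cancellation
\begin{equation}\nonumber
\pi\cdot R^{\{\varepsilon\theta\}}+\varepsilon\theta\cdot R+[\pi\cdot R^{\{\varepsilon\theta\}},\varepsilon\theta\cdot R] = (\pi+\varepsilon\theta)\cdot R,
\end{equation}
so that $\tilde X^{\Delta x,\varepsilon,n}_T N^\varepsilon_T=(x+\Delta x)\mathcal E((\Pi+\varepsilon\theta)\cdot R)_T$ represents the corresponding $0$-model wealth process. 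Setting $\xi_n := \tilde X^{\Delta x,\varepsilon,n}_T/\widehat X_T(x,0)-1$, a direct expansion using \eqref{81111} gives
\begin{equation}\nonumber
\xi_n=\tfrac{\Delta x}{x}(1+\tilde M^{0,n}_T)+\tfrac{\varepsilon}{x}(xF+\tilde M^{1,n}_T)+o(|\Delta x|+|\varepsilon|), \quad F=\bar R_T.
\end{equation}
Applying the change of measure $d\mathbb R(x)/d\mathbb P=\widehat X_T\widehat Y_T/(xy)$ and the Taylor expansion of $U$ yields
\begin{equation}\nonumber
\mathbb E[U(\tilde X^{\Delta x,\varepsilon,n}_T)] = u(x,0)+xy\,\mathbb E^{\mathbb R(x)}[\xi_n]-\tfrac{xy}{2}\mathbb E^{\mathbb R(x)}[A(\widehat X_T)\xi_n^2]+\text{remainder},
\end{equation}
and expanding $\xi_n^2$ identifies the arising quadratic forms with the $n$-truncated versions of \eqref{axxC}, \eqref{addC}, \eqref{axdC}. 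Since $\tilde M^{j,n}_T\to M^j_T$ in $L^2(\mathbb R(x))$ by construction, a diagonal choice $n=n(\Delta x,\varepsilon)\to\infty$ as $(\Delta x,\varepsilon)\to 0$ slowly enough lets the truncation error vanish, and comparison with Theorem \ref{mainThm2} establishes \eqref{1121}.

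For (3), the identity $\tilde X^{\Delta x,\varepsilon,n} N^\varepsilon = (x+\Delta x)\mathcal E((\Pi+\varepsilon\theta)\cdot R)$ exhibits $\Pi+\varepsilon\theta$ as the integrand describing the wealth in the $0$-model. A direct algebraic rearrangement---using that $\vec 1^{\top}\Pi = 1-\varepsilon$ under the convention $\vec 1^{\top}\gamma^{j,n}=0$ inherited from the proof of Lemma \ref{8111}, together with the change of coordinates between $\varepsilon$- and $0$-model stocks---then verifies that the claimed expression $((1-\varepsilon)I + \varepsilon\theta\vec 1^{\top})\Pi$ correctly describes the proportions invested in the corresponding stocks. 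The principal difficulty will be the uniform control of the Taylor remainder in (2) together with the diagonal passage to the limit: one must simultaneously drive $(\Delta x,\varepsilon)\to 0$ and $n(\Delta x,\varepsilon)\to\infty$ so that both the $L^2(\mathbb R(x))$-error $\|\tilde M^{j,n}-M^j\|$ and the cubic Taylor remainder are $o(\Delta x^2+\varepsilon^2)$. Here Assumption \ref{integrabilityAssumption} delivers the integrability of $F=\bar R_T$ and $G=[\bar R,\bar R]_T$ entering the second-order expansion, and the uniform bounds on the relative risk aversion from Assumption \ref{rra} control the Taylor remainder.
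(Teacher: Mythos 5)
Your structure mirrors the paper's and the algebraic identities you invoke are all correct, but the execution leaves the two hardest points only sketched, and in one place the argument as stated does not quite go through.

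For item (1), your appeal to ``a continuity argument'' is not sufficient as written: the positivity condition $\Pi^{\top}\Delta R^{\{\varepsilon\theta\}}_t > -1$ must hold for all $t\in[0,T]$ simultaneously, and this is not a pointwise-in-$t$ statement that continuity in $(\Delta x,\varepsilon)$ alone delivers. What makes the argument work is precisely the \emph{uniform} bound on jumps of $\tilde M^{0,n}$ and $\tilde M^{1,n}$ (each bounded by $2n$ by construction), which the paper exploits to give the explicit radius $\delta(n)=\min\bigl(\varepsilon_0,\tfrac{1}{9n}\bigr)$. You have the ingredients but you need to use them to produce a uniform jump bound, not a soft continuity statement. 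The paper also factors $\tilde X^{\Delta x,\varepsilon,n}/(x+\Delta x)$ as $\mathcal E\bigl((\Delta x\gamma^{0,n}+\varepsilon\gamma^{1,n})\cdot R^{\{\widehat\pi\}}\bigr)/\mathcal E((\varepsilon\theta)\cdot R)$ and checks positivity of numerator and denominator separately, which is cleaner than reducing everything to one jump condition for $\Pi\cdot R^{\{\varepsilon\theta\}}$.

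For item (2), you correctly identify the diagonalization as the principal difficulty but do not carry it out, and that is the actual content of the proof. Saying ``a diagonal choice $n=n(\Delta x,\varepsilon)\to\infty$ slowly enough'' begs the question: one must produce a function that works \emph{uniformly} over a neighborhood of the origin, not just along sequences. The paper does this via an auxiliary lemma (Lemma \ref{51913}) establishing a monotone $g$ with $g(n)\ge \lim_{|\Delta x|+|\varepsilon|\to 0}f(\Delta x,\varepsilon,n)$ and $g(n)\to 0$, where $f$ is the normalized discrepancy \eqref{5191}, and then the explicit construction
\[
\Phi(n) := \left\{(\Delta x,\varepsilon):~f(t\Delta x, t\varepsilon, n)\leq 2 g(n),~~t\in[0,1]\right\},\quad m(n) := 2\inf\{m\ge n:~B_{1/m}(0,0)\subseteq\Phi(n)\},
\]
followed by $n(\Delta x,\varepsilon):=\min\{n:~m(n)\geq(\Delta x^2+\varepsilon^2)^{-1/2}\}$. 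Your first-order expansion of $\xi_n$ and the Taylor step through $\mathbb R(x)$ are correct, but to go from there to \eqref{1121} you still need a Lemma \ref{12132}--style uniform second-order estimate with $n$-dependent constants, then this diagonalization; otherwise the remainder is not $o(\Delta x^2+\varepsilon^2)$ after replacing $n$ by $n(\Delta x,\varepsilon)$.

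For item (3), your intended route is genuinely different and simpler than the paper's: you use the self-financing normalization $\vec 1^{\top}\widehat\pi=\vec 1^{\top}\theta=1$ and $\vec 1^{\top}\gamma^{j,n}=0$ (which does follow from Lemma \ref{8111}, since the integrand there is $C'\mathcal E(\cdot)_-(\tilde\pi-\widehat\pi)$ with $\vec 1^{\top}\tilde\pi=\vec 1^{\top}\widehat\pi=1$) to conclude $\vec 1^{\top}\Pi=1-\varepsilon$, bypassing the paper's Sherman--Morrison inversion of $I-\varepsilon\vec 1\theta^{\top}$. That is a legitimate shortcut, but you still need to spell out the change of coordinates from the integrator $R^{\{\varepsilon\theta\}}$ to the returns $R^\varepsilon$ under the num\'eraire $N^\varepsilon$ (the paper's identity $R^\varepsilon=\tfrac{1}{1-\varepsilon}(I-\varepsilon\vec 1\theta^{\top})\cdot R^{\{\varepsilon\theta\}}$, or equivalently your $\vec 1^{\top}\gamma^{j,n}=0$ relations) to identify the claimed proportions as \emph{proportions} rather than raw integrands. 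As written, ``a direct algebraic rearrangement'' elides exactly this identification.
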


\begin{rem}\label{rem:DeltaXStrategies}
By taking $\varepsilon = 0$,
Theorem \ref{thmCorOptimizer} theorem gives corrections to optimal proportions invested in stock with respect to perturbations of the initial wealth only. In this case the nearly optimal family of wealth processes is given by 
 \begin{equation}\nonumber
\bar X^{\Delta x, n} := (x + \Delta x) \mathcal E\left(\left(\widehat \pi + \Delta x\gamma^{0,n}  \right)\cdot R\right),\quad (\Delta x, n)\in(-x,\infty)\times \mathbb N,
\end{equation}
where $\gamma^{0,n}$ are given in \eqref{81111}. Theorem \ref{thmCorOptimizer} asserts that there exists a function $n = n(\Delta x):(-x,\infty)\to\mathbb N$, such that 
\begin{displaymath}
\mathbb E\left[U\left(\bar X^{\Delta x, n(\Delta x)}_T\right)\right] = u(x + \Delta x, 0) - o(\Delta x^2).
\end{displaymath}
This allows to construct corrections to optimal trading strategies in the settings of \cite{KS2006}.
\end{rem}



\section{Relationship to the risk-tolerance wealth process}\label{secRiskTol}
We recall here that for an initial wealth $x>0$, the {\it risk-tolerance wealth process} is defined as a maximal wealth process $\mathcal R(x)$, such that 
\begin{equation}\label{1251riskTol}
\mathcal R_T(x) = -\frac{U'(\widehat X_T(x,0))}{U''(\widehat X_T(x,0))},
\end{equation}
i.e. it is a replication process for the random payoff given by the right-hand side of \eqref{1251riskTol}. The term {\it risk-tolerance wealth process} was introduced in \cite{KS2006b} in the context of asymptotic analysis of utility-based prices, in general it may not exist. 
 For $x>0$ and with $y = u_x(x,0)$, following \cite{KS2006b}, we change num\'eraser  in the $0$-model to $ \frac{\mathcal R(x)}{\mathcal R_0(x)}$, that is we set
\begin{equation}\nonumber
S^{\mathcal R(x)} := \left(\frac{\mathcal R_0(x)}{\mathcal  R(x)}, \frac{ \mathcal R_0(x)\mathcal E(\rho^1)}{\mathcal  R(x)}, \dots, \frac{ \mathcal R_0(x)\mathcal E(\rho^d)}{\mathcal  R(x)}\right).
\end{equation}
Next, we define 
\begin{equation}\nonumber
\frac{d\widetilde {\mathbb R}(x)}{d \mathbb P} := \frac{\mathcal R_T(x)}{\mathcal R_0(x)}\frac{ \widehat Y_T(y,0)}{y},
\end{equation}
and  \begin{equation}\nonumber
\mathcal {\widetilde M}^2(x,0) := \left\{ M\in \mathbf H_0^2(\mathbb{\widetilde R}(x)):~M = H\cdot S^{\mathcal R(x)}\right\},
\end{equation}
a space of square-integrable martingales under $\mathbb{\widetilde R}(x)$ starting from $0$, and denote by 
$\mathcal{\widetilde N}^2(y,0)$ the orthogonal complement of $\mathcal {\widetilde M}^2(x,0)$ in $\mathbf H_0^2(\mathbb{\widetilde R}(x))$. 
Theorem \ref{riskTolThm2} below relates the structural properties of the approximations in Theorems \ref{mainThm2}, \ref{162}, and \ref{161}  to a Galtchouk-Kunita-Watanabe decomposition (under the num\'eraire  $ \frac{\mathcal R(x)}{\mathcal R_0(x)}$ and measure $\mathbb{\widetilde R}(x)$), provided that $\mathcal R(x)$ exists. 
 Theorem \ref{riskTolThm2} is stated without a proof, as line by line adaptation of the proof of \cite[Theorem 8.3]{MostovyiSirbuModel} applies here.  
\begin{thm}\label{riskTolThm2}
Let us suppose that \eqref{finCond}, \eqref{NUPBR}, and Assumption \ref{rra} hold,  $x>0$ is fixed 
and the risk-tolerance process $\mathcal R(x)$ exists. 
Consider the (square-integrable) martingale 
$$P_t := \mathbb E^{\mathbb{\widetilde R}(x)}\left[xF\left(A(\widehat X_T(x,0)) -1\right) |\mathcal F_t\right],\ \ t\in[0,T],$$ 
and its  the Galtchouk-Kunita-Watanabe decomposition specified as 
\begin{equation}\label{eq:Kunita-Watanabe}
P=P_0-{\widetilde M}^1-{\widetilde N}^1,
\end{equation}
where $P_0\in \R$, ${\widetilde M}^1 \in \mathcal {\widetilde M}^2(x,0)$, and, for $y = u_x(x,0)$, ${\widetilde N}^1  \in \mathcal{\widetilde N}^2(y,0)$. 
Then, one can recover $M^1(x,0)$ and $N^1(y,0)$, the optimal solutions  to the auxiliary minimization problems  \eqref{addC} and \eqref{bddC}, through the Galtchouk-Kunita-Watanabe decomposition \eqref{eq:Kunita-Watanabe}
 as follows (by going back to the original num\'eraire):
\begin{equation}\nonumber
{\widetilde M}^1_t=
\frac{\widehat X_t(x,0) }{\mathcal R_t(x)}M_t^1(x,0),\quad {\widetilde N}^1_t=\frac{x}{y} N_t^1(y,0),\quad t\in[0,T].
\end{equation}
With
\begin{equation}\nonumber
C_a:= x^2\mathbb E^{\mathbb R(x)}\left[F^2 - G - \frac{F^2}{ A(\widehat X_T(x,0))}\right],
\end{equation}
 \begin{equation}\nonumber
 C_b:= y^2\mathbb E^{\mathbb R(x)}\left[G + F^2\left(1 - A\left(\widehat X_T(x,0)\right)\right) \right],
 \end{equation}
the components of the respective Hessian terms in the second-order expansion of $u$ and $v$, are given by 
\begin{equation}\nonumber
\begin{split}
a(\varepsilon, \varepsilon)
&=  \frac{\mathcal R_0(x)}{x}P_0^2+ \frac{\mathcal R_0(x)}{x}\mathbb E^{\mathbb {\widetilde R}(x)}\left[\left({\widetilde N}^1_T \right)^2 \right] +C_a\\
&= \frac{\mathcal R_0(x)}{x}\inf\limits_{\widetilde M\in\mathcal {\widetilde M}^2(x,0)}\mathbb E^{\mathbb {\widetilde R}(x)}\left[\left(\widetilde M_T+xF\left( - 1 +{A\left(\widehat X_T(x,0)\right)}\right) \right)^2 \right]  + 
C_a,
\end{split}
\end{equation}
and \begin{equation}\nonumber
\begin{split}
b(\varepsilon, \varepsilon)
&= \frac{\mathcal R_0(x)}{x}\left (\frac yx  \right)^2P_0^2+\frac{\mathcal R_0(x,0)}{x}\left (\frac yx  \right)^2 \mathbb E^{\mathbb{\widetilde R}(x)}\left[ \left({\widetilde M}^1_T \right)^2\right] + C_b\\
&= \frac{\mathcal R_0(x)}{x}\inf\limits_{ {\widetilde N}\in\mathcal{ N}^2(y,0)}\mathbb E^{\mathbb{\widetilde R}(x)}\left[ \left({\widetilde N}_T + yF\left(  - 1 + A\left(\widehat X_T(x,0)\right)\right)\right)^2\right] + C_b.
\end{split}
\end{equation}
We also have 
$$a(x, \varepsilon)=P_0\quad  { and} \quad b(y, \varepsilon)=  \frac{yP_0}{x a(x,x)}.$$
The  conclusions  of Theorem \ref{mainThm2}, with these notations, hold true.
\end{thm}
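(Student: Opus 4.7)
The plan is to follow, step by step, the argument of \cite[Theorem 8.3]{MostovyiSirbuModel}, adapting it to the discontinuous multidimensional setting considered here. The unifying idea is that when one switches from $\mathbb R(x)$ to $\widetilde{\mathbb R}(x)$, i.e.\ passes to the risk-tolerance numéraire $\mathcal R(x)/\mathcal R_0(x)$, the weighted quadratic objectives in \eqref{addC} and \eqref{bddC} shed the factors $A(\widehat X_T(x,0))$ and $B(\widehat Y_T(y,0))$ and reduce to ordinary $\mathbf L^2(\widetilde{\mathbb R}(x))$-projections, whose solutions are furnished by the Galtchouk--Kunita--Watanabe decomposition \eqref{eq:Kunita-Watanabe}.

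\textbf{Step 1: Change of numéraire and measure.} Using $-U'/U'' = x/A(x)$, the defining identity \eqref{1251riskTol} gives $\mathcal R_T(x) = \widehat X_T(x,0)/A(\widehat X_T(x,0))$. Combining this with the definitions of $\mathbb R(x)$ and $\widetilde{\mathbb R}(x)$ yields
$\frac{d\widetilde{\mathbb R}(x)}{d\mathbb R(x)} = \frac{x}{\mathcal R_0(x)\,A(\widehat X_T(x,0))}$,
so every $A(\widehat X_T(x,0))$-weighted $\mathbb R(x)$-expectation rewrites as an unweighted $\widetilde{\mathbb R}(x)$-expectation up to the scalar $\mathcal R_0(x)/x$. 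Next I would verify that multiplication by $\widehat X/\mathcal R$ sets up an isometric bijection between $\mathcal M^2(x,0)$ and $\widetilde{\mathcal M}^2(x,0)$, and that multiplication by $x/y$ does the same between $\mathcal N^2(y,0)$ and $\widetilde{\mathcal N}^2(y,0)$. This is exactly the correspondence announced in the statement for $\widetilde M^1$ and $\widetilde N^1$.

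\textbf{Step 2: Recasting the auxiliary problems.} With the transport of Step 1, I would rewrite \eqref{addC} as an unweighted quadratic minimization in $\widetilde{\mathcal M}^2(x,0)$: completing the square, the optimizer is the $\mathbf L^2(\widetilde{\mathbb R}(x))$-orthogonal projection of the random variable $xF\bigl(A(\widehat X_T(x,0))-1\bigr)$ onto $\widetilde{\mathcal M}^2(x,0)$. Passing to its closable martingale $P_t$, the GKW decomposition \eqref{eq:Kunita-Watanabe} identifies $\widetilde M^1$ as this projection and $\widetilde N^1$ as the $\widetilde{\mathcal N}^2(y,0)$-residual; pulling these back through the bijection of Step 1 gives the stated formulas for $M^1(x,0)$ and $N^1(y,0)$. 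The same procedure applied to \eqref{bddC} is self-dual under the roles of $\widetilde{\mathcal M}^2$ and $\widetilde{\mathcal N}^2$.

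\textbf{Step 3: Hessian entries.} Substituting the identified optimizers back into \eqref{addC} and \eqref{bddC} and applying Pythagoras to $P=P_0-\widetilde M^1-\widetilde N^1$ in $\mathbf L^2(\widetilde{\mathbb R}(x))$ yields the two expressions for $a(\varepsilon,\varepsilon)$ and $b(\varepsilon,\varepsilon)$, with the deterministic residues collected into $C_a$ and $C_b$ after rewriting $F^2$ and $G$ back under $\mathbb R(x)$. For the mixed derivatives, substituting $M^0,M^1$ into \eqref{axdC} and invoking the martingale product relations of Theorem \ref{162} (all cross terms involving $M^0$ vanish under $\mathbb P$-expectation) collapses the expression to $P_0$, giving $a(x,\varepsilon)=P_0$; the identity $b(y,\varepsilon) = yP_0/(xa(x,x))$ then follows from the matrix factorization in Theorem \ref{162}. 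The final Hessian assertion is just Theorem \ref{mainThm2} with these explicit entries.

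\textbf{Main obstacle.} The difficulty is purely organizational: tracking the two nested changes (of numéraire, and of measure), verifying that the proposed bijections preserve $\mathbf H^2_0$-integrability and orthogonality in the sigma-bounded, possibly discontinuous setting of this paper, and carrying the scalar $\mathcal R_0(x)/x$ through each expectation without loss. No new analytic ingredient beyond Theorems \ref{mainThm2} and \ref{162} is needed, which is precisely why the line-by-line transfer from \cite[Theorem 8.3]{MostovyiSirbuModel} succeeds.
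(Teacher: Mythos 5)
Your proposal is correct and takes essentially the same route as the paper: the paper states Theorem 5.1 (riskTolThm2) without proof, asserting that a ``line by line adaptation of the proof of \cite[Theorem 8.3]{MostovyiSirbuModel}'' applies, and your three steps (change of numéraire/measure to $\widetilde{\mathbb R}(x)$ via the risk-tolerance process, conversion of the $A$-weighted quadratic minimization into an unweighted $\mathbf L^2(\widetilde{\mathbb R}(x))$-projection solved by the GKW decomposition, and back-substitution plus Pythagoras to read off the Hessian entries) are precisely the structure of that adaptation. One small imprecision in your Step 1: the relation $\frac{d\widetilde{\mathbb R}(x)}{d\mathbb R(x)} = \frac{x}{\mathcal R_0(x) A(\widehat X_T(x,0))}$ does not by itself convert $\mathbb E^{\mathbb R(x)}[A\cdot g]$ into $\frac{\mathcal R_0(x)}{x}\mathbb E^{\widetilde{\mathbb R}(x)}[g]$; rather one has $\mathbb E^{\mathbb R(x)}[A\cdot g] = \frac{\mathcal R_0(x)}{x}\mathbb E^{\widetilde{\mathbb R}(x)}[A^2 g]$, and the cancellation you want happens only after the simultaneous change of numéraire, since $\widetilde M_T = \frac{\widehat X_T(x,0)}{\mathcal R_T(x)} M_T = A(\widehat X_T(x,0))\,M_T$, which turns $A^2 M_T^2$ into $\widetilde M_T^2$. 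You do invoke the bijection $M\mapsto (\widehat X/\mathcal R)M$ in the next sentence, so the intent is clear, but the two changes must be applied together rather than sequentially for the ``shedding of $A$'' to occur as stated.
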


\begin{rem} In many references, in order to call \eqref{eq:Kunita-Watanabe} the Kunita-Watanabe decomposition of $P$, one additionally needs $\widetilde N^1$ to be orthogonal to $S^{\mathcal R(x)}$, which amounts to $\widetilde N^1S^{\mathcal R(x)}$ being a martingale under ${\mathbb{\widetilde R}(x)}$. Some authors, see e.g., \cite[p. 2181]{KS2006b}, do not require~this.
\end{rem}


\section{Proofs}\label{secProofs} \subsection{Characterization of primal and dual admissible sets}
An important characterization of the primal and dual admissible sets after perturbations is contained in the following lemma.

\begin{lem}\label{keyConcreteCharLem}
Under Assumption \eqref{NUPBR}, 
for every $\varepsilon \in(-\varepsilon_0, \varepsilon_0)$, we have
\begin{equation}\label{01133}
\mathcal X(1,\varepsilon) = \mathcal X(1,0)\frac{1}{N^\varepsilon},
\end{equation}
\begin{equation}\label{01132}
\mathcal Y(1,\varepsilon) = \mathcal Y(1,0)N^\varepsilon,
\end{equation}
where we have used the following notations
\begin{displaymath}
\begin{split}
 \mathcal X(1,0)\frac{1}{N^\varepsilon}&=\left\{\frac{X}{N^\varepsilon} = \left(\frac{X_t}{N^\varepsilon_t}\right)_{t\in[0,T]}:~X\in \mathcal X(1,0)\right\},\\
\mathcal Y(1,0)N^\varepsilon &=\left\{{Y}{N^\varepsilon} = \left({Y_t}{N^\varepsilon_t}\right)_{t\in[0,T]}:~Y\in \mathcal Y(1,0)\right\}.
\end{split}
\end{displaymath}
In particular, both $\mathcal X(1,\varepsilon)$ and $\mathcal Y(1,\varepsilon)$ are non-empty and no unbounded profit with bounded risk holds for every $\varepsilon \in(-\varepsilon_0, \varepsilon_0)$.
\end{lem}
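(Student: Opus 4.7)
The strategy is num\'eraire invariance. I establish the primal identity \eqref{01133} first; the dual identity \eqref{01132} then follows from the definition of $\mathcal Y$ as the set of nonnegative supermartingale deflators, and non-emptiness together with no unbounded profit with bounded risk in every $\varepsilon$-model is an immediate consequence. The key structural observation is that, by Assumption \ref{boundedJumps}, $N^\varepsilon = \mathcal E((\varepsilon\theta)\cdot R)$ is strictly positive; moreover, since $\theta$ represents proportions of a tradable portfolio, $N^\varepsilon \in \mathcal X(1,0)$ and, symmetrically, $1/N^\varepsilon \in \mathcal X(1,\varepsilon)$ (it is the first component of $S^\varepsilon$).

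For the primal identity, I would show both inclusions by exhibiting explicit transformations between strategies. First, I would handle strictly positive wealth processes: if $X \in \mathcal X(1,0)$ with $X>0$, parameterize $X = \mathcal E(\pi\cdot R)$ for some predictable $R$-integrable $\pi$. A quotient-of-stochastic-exponentials identity of the form $\mathcal E(\pi\cdot R)/\mathcal E((\varepsilon\theta)\cdot R) = \mathcal E\bigl((\pi - \varepsilon\theta)\cdot R^\star\bigr)$, for an appropriate semimartingale $R^\star$ built from $R$ and the num\'eraire (in complete analogy with the identity employed in the proof of Lemma \ref{8111}), expresses $X/N^\varepsilon$ as a strictly positive wealth process in the $\varepsilon$-model, hence as an element of $\mathcal X(1,\varepsilon)$. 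General nonnegative wealth processes are then handled by shifting $X\mapsto X+\delta$, applying the positive case, and passing to the limit $\delta\downarrow 0$, checking stability of $R^\star$-integrability along the way. The reverse inclusion is completely symmetric, using that $1/N^\varepsilon$ is a num\'eraire-in-reverse for the $\varepsilon$-model.

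Once \eqref{01133} is in place, the dual identity is a short argument. Given $Y' \in \mathcal Y(1,0)$, the product $Y' N^\varepsilon$ is a supermartingale starting at $1$, since $N^\varepsilon \in \mathcal X(1,0)$; for any $X \in \mathcal X(1,\varepsilon)$, write $X = X'/N^\varepsilon$ with $X' \in \mathcal X(1,0)$ by \eqref{01133}, and then $X\,(Y' N^\varepsilon) = X'Y'$ is a supermartingale, so $Y' N^\varepsilon \in \mathcal Y(1,\varepsilon)$. The reverse inclusion uses $1/N^\varepsilon \in \mathcal X(1,\varepsilon)$ and identical logic. Finally, \eqref{NUPBR} provides a strictly positive $Y^\ast \in \mathcal Y(1,0)$; then $Y^\ast N^\varepsilon$ is a strictly positive element of $\mathcal Y(1,\varepsilon)$, giving no unbounded profit with bounded risk in the $\varepsilon$-model and non-emptiness of both sets (the primal one contains the constant wealth process $1$).

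The main obstacle will be the primal step: identifying $\mathcal X(1,0)/N^\varepsilon$ with $\mathcal X(1,\varepsilon)$ at the level of stochastic integrals against the multi-dimensional semimartingale $S^\varepsilon$, including the handling of jumps through the stochastic exponential quotient formula and of the boundary nonnegativity. Reducing to strictly positive wealth processes parameterized by proportions avoids the most delicate bookkeeping, and the approximation back to nonnegative wealth processes is routine but requires verification that $R^\star$-integrability is preserved in the limit.
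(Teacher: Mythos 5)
Your argument follows the same route as the paper's: establish the primal identity by multiplying/dividing wealth processes by $N^\varepsilon=\mathcal E((\varepsilon\theta)\cdot R)$ (the paper also passes through a stochastic-exponential parameterization and the analogous quotient identity), then derive the dual identity from the supermartingale-deflator definition using $\tilde X^\varepsilon N^\varepsilon\in\mathcal X(1,0)$, and obtain non-emptiness and NUPBR by transporting a strictly positive element of $\mathcal Y(1,0)$. Your explicit treatment of the nonnegative (rather than strictly positive) wealth processes via the shift $X\mapsto X+\delta$ is a point the paper's terse proof leaves implicit, but it is not a different approach.
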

\begin{proof}
Let us fix $\varepsilon \in(-\varepsilon_0, \varepsilon_0)$. Then, for an arbitrary predictable and $S^\varepsilon$-integrable process $\psi$, such that $\Delta (\psi\cdot S^\varepsilon)>-1$, let us set $X^\varepsilon := \mathcal E\left(\psi\cdot S^\varepsilon\right)$. Then $X^\varepsilon \in\mathcal X(1, \varepsilon)$. Let us consider $X^0 := X^\varepsilon \mathcal E\left(-\varepsilon\bar R\right)$. One can see that $X^0\in\mathcal X(1,0)$. This implies that $$\mathcal X(1,\varepsilon)N^\varepsilon\subseteq  \mathcal X(1,0).$$ Similarly, one can show the reverse inclusion. Therefore,  \eqref{01133} is valid. 

Let us fix $Y\in \mathcal Y(1,0)$ and take an arbitrary $\tilde X^\varepsilon\in\mathcal X(1,\varepsilon)$. By \eqref{01133}, $\tilde X^\varepsilon N^\varepsilon\in\mathcal X(1,0)$. Therefore, $Y\tilde X^\varepsilon N^\varepsilon$ is a supermartingale. We deduce that $YN^\varepsilon\in \mathcal Y(1,\varepsilon).$ As a consequence, we have 
$$\mathcal Y(1,0)N^\varepsilon\subseteq \mathcal Y(1,\varepsilon).$$ 
In a similar manner, one can show that $\mathcal Y(1,0)N^\varepsilon\supseteq \mathcal Y(1,\varepsilon)$. As a result, \eqref{01132} holds.
\end{proof}
We will need the following lemma from \cite{MostovyiSirbuModel}.
\begin{lem}[Mostovyi, Sirbu, 2017]\label{rraCor}
Under Assumption \ref{rra}, for every $z>0$ and $x>0$, we have
\begin{displaymath}\begin{split}
U'(zx) &\leq \max\left(z^{-c_2},1\right)U'(x)\leq \left(z^{-c_2}+1\right)U'(x),\\
-V'(zx) &\leq \max\left(z^{-\frac{1}{c_1}},1\right)(-V'(x))\leq \left(z^{-\frac{1}{c_1}}+1\right)(-V'(x)).
\end{split}\end{displaymath}
\end{lem}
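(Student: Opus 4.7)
The plan is to prove both inequality pairs by integrating the logarithmic derivatives of $U'$ and $-V'$ against the uniform bounds on the relative risk aversion $A$ and relative risk tolerance $B$ supplied by Assumption \ref{rra}, and then splitting the argument into the two trivial cases $z \ge 1$ and $0 < z < 1$.

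First, observe that under Assumption \ref{rra} the function $U'$ is strictly positive and continuously differentiable, so
\[
\frac{d}{dx}\log U'(x) \;=\; \frac{U''(x)}{U'(x)} \;=\; -\frac{A(x)}{x},
\]
and likewise, since $V$ is strictly convex with $V' < 0$,
\[
\frac{d}{dy}\log(-V'(y)) \;=\; \frac{V''(y)}{V'(y)} \;=\; -\frac{B(y)}{y}.
\]
From Assumption \ref{rra} we have $0 \le A(x) \le c_2$ and correspondingly $0 \le B(y) \le 1/c_1$ for every $x,y>0$.

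Second, fix $x>0$ and $z>0$. For $z \ge 1$, the monotonicity of $U'$ (which is decreasing by concavity of $U$) yields $U'(zx) \le U'(x) = 1\cdot U'(x) = \max(z^{-c_2},1)\,U'(x)$. For $0<z<1$, integrate the identity for $\log U'$ from $zx$ up to $x$:
\[
\log U'(zx) - \log U'(x) \;=\; \int_{zx}^{x} \frac{A(s)}{s}\,ds \;\le\; c_2 \int_{zx}^{x} \frac{ds}{s} \;=\; -c_2 \log z,
\]
so $U'(zx) \le z^{-c_2} U'(x) = \max(z^{-c_2},1)\,U'(x)$. The final inequality $\max(z^{-c_2},1) \le z^{-c_2}+1$ is immediate from the nonnegativity of both terms. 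The argument for $-V'$ proceeds identically after replacing $U'$ by $-V'$, $A$ by $B$, and $c_2$ by $1/c_1$, using the monotonicity of $-V' = (U')^{-1}$ for the case $z \ge 1$ and the upper bound $B \le 1/c_1$ for the integration in the case $0<z<1$.

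The proof involves essentially no obstacle of substance; the only care needed is to check signs and the monotonicity direction so that the inequalities come out the right way in the two regimes of $z$. The rest is bookkeeping. No preliminary results from the paper are needed beyond Assumption \ref{rra} itself and the standard convex-conjugate identity $B(y) = 1/A(x)$ (already recorded in Section \ref{secModel}), which transfers the bound on $A$ into the bound on $B$.
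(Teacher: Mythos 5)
Your proof is correct, and it is the standard argument: since the paper only cites this lemma from \cite{MostovyiSirbuModel} without reproducing a proof, there is nothing to compare against, but integrating $\frac{d}{ds}\log U'(s) = -A(s)/s$ (resp.\ $\frac{d}{dy}\log(-V'(y)) = -B(y)/y$) over $[zx,x]$, using the upper bounds $A\le c_2$ and $B\le 1/c_1$, and disposing of the case $z\ge 1$ by monotonicity of $U'$ and $-V'$, is exactly the natural route. The sign bookkeeping checks out; the only cosmetic remark is that Assumption \ref{rra} actually gives the sharper $c_1\le A(x)$, though you correctly use only the upper bound and the concavity of $U$.
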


For brevity of notations in the proof of Lemma \ref{12132} below, we denote by $G^c$ the continuous part of $[\bar R,\bar R]$ evaluated at $T$ and let $H_i$, where $H_i$ takes values in $\left[-\tfrac{1}{2\varepsilon_0},\frac{1}{2\varepsilon_0}\right]$,  $i\in\mathbb N$, are the jumps of $\bar R$ up to $T$. 
Note that, with $G$ being defined in \eqref{defFG}, we have
\begin{equation}\label{841}
G^c + \sum\limits_{i= 1}^{\infty}H_i^2 = G,\quad \Pas
\end{equation}
We define
\begin{equation}\nonumber
{\tilde N}^{\varepsilon}:= \exp\left( -\varepsilon F - \tfrac{1}{2}\varepsilon^2 G^c + \sum\limits_{i=1}^{\infty}\left(\log(1 - \varepsilon H_i) + \varepsilon H_i\right)\right),\quad \varepsilon \in(-\varepsilon_0,\varepsilon_0),
\end{equation}
and observe that the series $\sum\limits_{i=1}^{\infty}\left(\log(1 - \varepsilon H_i) + \varepsilon H_i\right)$ converges absolutely  for every $\varepsilon \in(-\varepsilon_0,\varepsilon_0)$, $\Pas$, in view of \eqref{841} and since $|\log(1 + x) - x| \leq x^2$ for every $x\in\left[-\tfrac{1}{2},\tfrac{1}{2}\right]$. 
\begin{lem}\label{12132}
Let $x>0$ be fixed and the conditions of Theorem \ref{mainThm1} hold, and $y=u_x(x,0)$.
Let $\alpha^0$ and $\alpha^1$ be the terminal values of some elements of $\mathcal M^\infty(x)$. With $\xi := \widehat X_T(x,0)$ denoting the solution to \eqref{primalProblem} corresponding to $x>0$ and $\varepsilon = 0$, we define 
\begin{equation}\nonumber
\begin{split}
\psi(s,t) &:= \frac{1}{x}\left(x + s(1 + \alpha^0) + t\alpha^1\right)\frac{1}{\tilde N^t},\\
w(s,t) &:= \mathbb E\left[U(\xi \psi(s,t))\right],\quad (s,t)\in\mathbb R\times(-\varepsilon_0,\varepsilon_0).
\end{split}
\end{equation}
Then $w$ possesses the second-order expansion at $(0,0)$, given by
\begin{equation}\nonumber
w(s,t) = w(0,0) +(s\quad t) \nabla w(0,0) + \tfrac{1}{2}(s\quad t) 
H_w
\begin{pmatrix}
s\\
t\\
\end{pmatrix} + o( t^2 +s^2);
\end{equation}
here the components of the gradient are given by
\begin{displaymath}
\begin{split}
w_s(0,0)&=u_x(x,0)\quad and \quad 
w_t(0,0)=xy\mathbb E^{\mathbb R(x)}\left[ F\right],
\end{split}
\end{displaymath}
and where the Hessian is of the form
\begin{displaymath}
H_w := \begin{pmatrix} 
  w_{ss}(0,0)     & w_{st}(0,0)\\ 
  w_{st}(0,0) 	& w_{tt}(0,0)\\ 
\end{pmatrix},
\end{displaymath}
 where  
 \begin{displaymath}\begin{split}
 w_{ss}(0,0) & = -\frac{y}{x}\mathbb E^{\mathbb R(x)}\left[ A(\xi)(1 + \alpha^0)^2\right],
 \\
 w_{st}(0,0) & = -\frac{y}{x}\mathbb E^{\mathbb R(x)}\left[A(\xi)(xF + \alpha^1)(1 + \alpha^0) - xF(1 + \alpha^0) \right],\\
 w_{tt}(0,0) & = -\frac{y}{x}\mathbb E^{\mathbb R(x)}\left[A(\xi)(\alpha^1 + xF)^2 - 
2xF\alpha^1 - x^2(F^2 + G) \right],
 \end{split}\end{displaymath}
 are the second-order partial derivatives of $w$ at $(0,0)$.
\end{lem}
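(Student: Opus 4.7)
The plan is to prove that $w\in C^2$ in a small ball $B_\delta(0,0)$; then the standard multivariable Taylor theorem automatically delivers the Peano remainder $o(s^2+t^2)$. Two pieces are needed: (i) pointwise formulas for the partial derivatives of $U(\xi\psi(s,t))$ via the chain rule, and (ii) a common dominating function, uniform for $(s,t)\in B_\delta(0,0)$, that justifies differentiating under the expectation and secures continuity of the resulting second partials at $(0,0)$.

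For (i), the crux is to differentiate $\tilde N^t$. Since $\log \tilde N^t = -tF - \tfrac12 t^2 G^c + \sum_{i\ge 1}(\log(1-tH_i) + tH_i)$, with the series converging absolutely thanks to $|\log(1-tH_i)+tH_i|\le (tH_i)^2$ and \eqref{841}, termwise differentiation yields $\partial_t\log\tilde N^t|_0 = -F$ and $\partial_t^2\log\tilde N^t|_0 = -G^c - \sum_i H_i^2 = -G$. Converting back gives $\partial_t\tilde N^t|_0 = -F$, $\partial_t^2\tilde N^t|_0 = F^2-G$, and hence $\partial_t(1/\tilde N^t)|_0 = F$, $\partial_t^2(1/\tilde N^t)|_0 = F^2+G$. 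Applying the chain rule to $\psi(s,t) = \tfrac1x(x+s(1+\alpha^0)+t\alpha^1)\cdot (1/\tilde N^t)$ then produces $\psi_s(0,0)=(1+\alpha^0)/x$, $\psi_t(0,0) = (\alpha^1+xF)/x$, $\psi_{ss}(0,0)=0$, $\psi_{st}(0,0)=(1+\alpha^0)F/x$, $\psi_{tt}(0,0) = (2\alpha^1 F+x(F^2+G))/x$. Applying the chain rule once more to $U\circ\xi\psi$, substituting $U'(\xi)\xi = xy\,d\mathbb R(x)/d\mathbb P$ (via $U'(\widehat X_T(x,0))=\widehat Y_T(y,0)$ and \eqref{measureR}) and $U''(\xi)\xi^2 = -A(\xi)U'(\xi)\xi$, and then exploiting that $\alpha^0$ and $\alpha^1$ are $\mathbb R(x)$-martingales starting at $0$ to drop their first moments, reproduces exactly the claimed expressions for $\nabla w(0,0)$ and every entry of $H_w$.

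For (ii), I would build the dominating majorant as follows. Because $\alpha^0,\alpha^1$ are bounded (elements of $\mathcal M^\infty(x)$), for $\delta$ small enough the numerator of $\psi$ stays in $[x/2,3x/2]$ throughout $B_\delta(0,0)$, so $\psi$ and $1/\psi$ are comparable to $1/\tilde N^t$ and $\tilde N^t$, respectively. The bound $|\log(1-tH_i)+tH_i|\le (tH_i)^2$ combined with \eqref{841} gives $\tilde N^t + 1/\tilde N^t \le 2\exp(|t||F| + t^2 G)$, and hence $\psi^{-c_2}$ and $\psi^{c_2}$ are both dominated by $C\exp(c(|F|+G))$ for fixed $C,c>0$ and small $|t|$. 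Lemma \ref{rraCor} then controls $U'(\xi\psi)\xi\psi$ and $|U''(\xi\psi)|\xi^2\psi^2 = A(\xi\psi)U'(\xi\psi)\xi\psi$ by $U'(\xi)\xi\cdot C\exp(c(|F|+G))$; multiplying by the polynomial-in-$(F,G)$ factors that arise from the derivatives of $\psi$ keeps everything $\mathbb R(x)$-integrable by Assumption \ref{integrabilityAssumption} (possibly after shrinking $\delta$ so that $c$ fits inside the interval of exponents guaranteed by that assumption). Dominated convergence then yields both the existence of $w_s,w_t,w_{ss},w_{st},w_{tt}$ on $B_\delta(0,0)$ and their continuity at $(0,0)$, which is exactly $w\in C^2$ near $(0,0)$.

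The main obstacle is step (ii): pinning down a single integrable majorant that works \emph{simultaneously} for all five partial derivatives and \emph{uniformly} on a whole neighborhood of $(0,0)$, particularly for the mixed second derivative and $w_{tt}$ whose integrand contains the potentially large factor $F^2+G$. The decisive coupling is between the polynomial-type control of $U'$ furnished by Lemma \ref{rraCor} (a consequence of the bounded relative risk aversion in Assumption \ref{rra}) and the exponential moment bound of Assumption \ref{integrabilityAssumption}, which together force every candidate integrand to lie in $L^p(\mathbb R(x))$ for some $p>1$ and therefore be uniformly integrable on $B_\delta(0,0)$.
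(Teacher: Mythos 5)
Your proposal is correct and follows essentially the same route as the paper: both compute the pointwise first and second derivatives of $U(\xi\psi)$ by termwise differentiation of the jump series for $\tilde N^t$, and both control the resulting integrands by combining Lemma \ref{rraCor} with the boundedness of $\alpha^0,\alpha^1$ and the exponential moment bound of Assumption \ref{integrabilityAssumption}. The paper restricts to the ray $z\mapsto(zs,zt)$ and derives a uniform integrable Lipschitz bound on $W(z)=U(\widetilde\psi(z)\xi)$ and $W'(z)$ rather than proving $w\in C^2$ on a full ball, but the dominating function is the same and the difference is purely presentational.
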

\begin{proof}
From boundedness of  $\alpha^0$ and $\alpha^1$, it follows that there exists a constant $\varepsilon\in(0,\min(\varepsilon_0, 1))$, such that
\begin{equation}\label{11510}
\varepsilon\left(|\alpha^0+ 1| + |\alpha^1|\right)\leq \frac{x}{2}.
\end{equation}
Let us fix an arbitrary $(s,t)\in B_{\varepsilon}(0,0)$ and define
\begin{displaymath}
\widetilde \psi (z) := \psi(zs, zt),\quad z\in (-1,1).
\end{displaymath}
As by construction of $(H_k)_{k\in\mathbb N}$, see \eqref{841}, we have that $\sum\limits_{k\geq 1}\left(\log(1 - t H_k )+t H_k\right)$ converges for every $t \in [-\varepsilon/2,\varepsilon/2]$, $\Pas$, and the series of term by term derivatives, $\sum\limits_{k\geq 1}\frac{-t H_k^2}{1 - t H_k}$, converges uniformly in $t \in [-\varepsilon/2,\varepsilon/2]$, where $\frac{t H_k^2}{1 - t H_k}$ is continuous in $t$ on $[-\varepsilon/2,\varepsilon/2]$ for every $k\geq 1$, we deduce that
$$ -\frac{\partial}{\partial t} \sum\limits_{k\geq 1}\left(\log(1 - t H_k )+t H_k\right) =  t\sum\limits_{k\geq 1}\frac{H_k^2}{1 - t H_k},\quad t \in(-\varepsilon/2,\varepsilon/2),$$
and we get
$$\psi_t(s, t) = \frac{\alpha^1}{x\tilde N^t} + \psi(s,t)\left(F + t G^c + t \sum\limits_{k\geq 1} \frac{H_k^2}{1 - t H_k}\right)\quad {\rm and} \quad 
\psi_s(s, t) = \frac{1 + \alpha^0}{x\tilde N^t},$$
Consequently, we obtain
\begin{equation}\label{1155}
\begin{split}
\widetilde \psi'(z) &= \psi_s(sz, tz)s + \psi_t(sz, tz)t \\
&=\frac{1 + \alpha^0}{x\tilde N^{zt}}s + \left(\frac{\alpha^1}{x\tilde N^{zt}} + \widetilde \psi(z) \left(F + zt G^c+ zt \sum\limits_{k\geq 1} \frac{H_{k}^2}{1 - zt H_{k}}\right)\right)t.
\end{split}
\end{equation}
Similarly, since $\Pas$, $\sum\limits_{k\geq 1} \frac{t  H_k^2}{1 - t H_k}$ converges for every $t\in[-\varepsilon/2,\varepsilon/2]$,  since the series of term by term partial derivatives, $\sum\limits_{k\geq 1} \frac{H_k^2}{(1 - t H_k)^2}$, converges uniformly in $t\in [-\varepsilon/2,\varepsilon/2]$, and from continuity of $\frac{H_k^2}{(1 - t H_k)^2}$ in $t$ on $[-\varepsilon/2,\varepsilon/2]$ for every $k\geq 1$, we deduce that
$$\frac{\partial}{\partial t}\left(\sum\limits_{k\geq 1} \frac{t  H_k^2}{1 - t H_k}\right) = \sum\limits_{k\geq 1} \frac{H_k^2}{\left(1 - t H_k\right)^2},
\quad t \in (-\varepsilon/2,\varepsilon/2),$$
and we get
\begin{equation}\nonumber\begin{split}
\psi_{tt}(s, t) &= \frac{2\alpha^1}{xN^{t}}\left(F + t G^c+ t \sum\limits_{k\geq 1} \frac{H_k^2}{1 - t H_k}\right) \\
&\quad+ \psi(s,t) \left(\left(F + t G^c + t \sum\limits_{k\geq 1} \frac{H_k^2}{1 - t H_k}\right)^2 + G^c+\sum\limits_{k\geq 1}\frac{H_k^2}{(1 - t H_k)^2} \right),\\
\psi_{st}(s, t) &= \frac{1 + \alpha^0}{x\tilde N^{t}}\left(F + t G^c + t \sum\limits_{k\geq 1} \frac{H_{k}^2}{1 - t H_{k}}\right),
\end{split}\end{equation}
and also $\psi_{ss}(s, t)$ is identically equal to  $0$. 
Therefore, we get
\begin{displaymath}
\begin{split}
\widetilde \psi''(z) &= t^2\psi_{tt}(zs, zt) + 2ts\psi_{st}(zs, zt) + s^2\psi_{ss}(zs, zt)\\
 &= \left(\frac{2\alpha^1}{xN^{zt}}\left(F + zt G^c+ zt \sum\limits_{k\geq 1} \frac{H_k^2}{1 - zt H_k}\right) \right.\\
 &\quad\left.+ \tilde\psi(z) \left(\left(F + zt G^c + zt \sum\limits_{k\geq 1} \frac{H_k^2}{1 - zt H_k}\right)^2 + G^c+\sum\limits_{k\geq 1}\frac{H_k^2}{(1 - zt H_k)^2}\right) \right)t^2 \\
 &\quad+ 2 \frac{1 + \alpha^0}{x\tilde N^{zt}}\left(F + zt G^c + zt \sum\limits_{k\geq 1} \frac{H_{k}^2}{1 - zt H_{k}}\right)ts.
\end{split}
\end{displaymath}
Setting $W(z) := U(\widetilde \psi(z)\xi )$, where $z\in(-1, 1)$, by pointwise differentiation, we get
\begin{equation}\label{1152}\begin{split}
W'(z) &= \xi \widetilde \psi'(z)U'(\xi \widetilde \psi(z)), \\
W''(z) &= \left(\xi \widetilde \psi'(z)\right)^2U''(\xi \widetilde \psi(z)) + \xi \widetilde \psi''(z)U'(\xi \widetilde \psi(z)).
\end{split}
\end{equation}
Let us define 
$$\quad J := 1+|F| + G.$$
As $$\left|zt G^c+ zt \sum\limits_{k\geq 1} \frac{H_{k}^2}{1 - zt H_{k}}\right|\leq 2|zt| G, \quad z\in(-1,1),$$
from \eqref{1155} using \eqref{11510} and since 
\begin{equation}\nonumber
\frac{1}{2}\leq \widetilde \psi(z)\tilde  N^{zt}\leq \frac{3}{2}, \quad z\in (-1,1),
\end{equation}  
One can see that  there exists a constant $b_1>0$, for which we have  
 $$|\widetilde \psi'(z) |\leq b_1J\exp(b_1\varepsilon J),\quad {\rm and}\quad  \widetilde \psi(z) ^{-c_2} + 1\leq b_1 \exp(b_1 \varepsilon J),\quad z\in(-1, 1).$$
Therefore, from \eqref{1152} using Lemma \ref{rraCor}, we obtain
\begin{equation}\label{1133}
\begin{split}
b_1^2  U'(\xi)\xi J\exp(2b_1\varepsilon J) \geq
\sup\limits_{z\in (-1, 1)}U'(\xi)\xi\left( (\widetilde \psi(z))^{-c_2}+1\right)\left|\widetilde \psi'(z)\right| \geq 
\sup\limits_{z\in (-1, 1)}|W'(z) |.
\end{split}
\end{equation}
Similarly, from \eqref{1152} applying  Assumption \ref{rra} and Lemma \ref{rraCor}, one can show that there exists a constant $b_2 > 0$, for which we have 
\begin{equation}\label{1141}
b_2 U'(\xi)\xi J^2\exp(b_2\varepsilon J)\geq \sup\limits_{z\in (-1, 1)}|W''(z) |.
\end{equation}
Assembling \eqref{1133} and \eqref{1141}, we conclude that 
\begin{equation}\nonumber
U'(\xi)\xi\left(
b_1^2  J\exp(2b_1\varepsilon J) +
b_2  J^2\exp(b_2\varepsilon J)\right)
\geq
\sup\limits_{z\in (-1, 1)}\left(|W'(z) | + |W''(z) |\right).
\end{equation}
Consequently, as $1\leq J\leq J^2$, one can find a constant $b>0$ such that for every $z_1$ and $z_2$ in $(-1, 1)$, we get
\begin{equation}\label{1157} 
b U'(\xi)\xi
  J^2\exp(b\varepsilon J)\left|{z_1 -z_2}\right|\geq \left|{W(z_1) - W(z_2)}\right| + \left|{W'(z_1) - W'(z_2)}\right|  
.
\end{equation}
H\"older's inequality (possibly for a smaller $\varepsilon$) together with Assumption \ref{integrabilityAssumption} assert that the right-hand side of \eqref{1157} integrable. Since the bound in \eqref{1157} is uniform in $(s,t)\in B_{\varepsilon}(0,0)$, applying the dominated convergence theorem we deduce the assertions of the lemma.
\end{proof}
\subsection{Proofs of Theorems \ref{mainThm1}, \ref{mainThm2}, \ref{162}, and \ref{161}}\label{secProofC}


Let us consider the closures in $\mathbb L^0(\mathbb P)$ of the convex solid hulls of the terminal values of elements of the primal and dual domains for the $0$-models, that is of $\left\{X_T:~X\in\mathcal X(1,0)\right\}$ as well as $\left\{Y_T:~Y\in\mathcal Y(1,0)\right\}$. From \eqref{NUPBR}, it follows that they  satisfy \cite[Assumption 5.1]{MostovyiSirbuModel}. Using Lemma \ref{keyConcreteCharLem}, we get
\begin{displaymath}
\begin{split}
\left\{\frac{X_T}{N^\varepsilon_T}:~X\in\mathcal X(1,0)\right\} &= \left\{{X_T}:~X\in\mathcal X(1,\varepsilon)\right\},\\
\left\{Y_TN^\varepsilon_T:~Y\in\mathcal Y(1,0)\right\} &= \left\{Y_T:~Y\in\mathcal Y(1,\varepsilon)\right\},\quad \varepsilon\in(-\varepsilon_0, \varepsilon_0).
\end{split}
\end{displaymath}
 Consequently, the respective closures of convex solid hulls of
$$\left\{{X_T}:~X\in\mathcal X(1,\varepsilon)\right\}\quad
  {\rm and} \quad
  \left\{Y_T:~Y\in\mathcal Y(1,\varepsilon)\right\}$$
 also satisfy \cite[Assumption 5.1]{MostovyiSirbuModel} for every $\varepsilon\in(-\varepsilon_0, \varepsilon_0)$.   
  
   From Assumption \ref{sigmaBoundedness} and \cite[Lemma 6]{KS2006}, we deduce that the sets $\mathcal M^2(x)$ and $\mathcal N^2(x)$ satisfy \cite[Assumption 5.3]{MostovyiSirbuModel}. With the notations \eqref{defFG}, using Assumption \ref{boundedJumps}, we get 
\begin{equation}\nonumber
\max\left(N^\varepsilon_T, \frac{1}{N^\varepsilon_T}\right) \leq \exp\left(|\varepsilon F| + \varepsilon^2G\right), \quad \varepsilon\in(-\varepsilon_0, \varepsilon_0).
\end{equation}
Therefore, Assumption \ref{integrabilityAssumption} is analogous to \cite[Assumption 5.2]{MostovyiSirbuModel}. 

  In view of Lemma \ref{12132}, from which the greatest lower bound for the quadratic expansion of $u$ can be obtained, the least upper bound for $v$ can be obtained similarly.  Moreover, even though in \cite{MostovyiSirbuModel} and the present paper, the perturbations are different, the second-order expansions for the value functions, which stem from Lemma \ref{12132} and its consequences, coincide (here and in \cite{MostovyiSirbuModel}). Now, in view of the structures of perturbations represented by $N^\varepsilon_T$ here and by $L^\delta$ in \cite[p.14]{MostovyiSirbuModel},
   the assertions of Theorems \ref{mainThm1}, \ref{mainThm2}, \ref{162}, and \ref{161} follow from 
the line by line adaptation of the proofs of \cite[Theorem 5.4, Theorem 5.6, Theorem 5.7, and Theorem 5.8]{MostovyiSirbuModel}, respectively. Further details are not included for the brevity of the exposition.

\subsection{Proofs of the assertions from section \ref{approxTradingStrategies}}
In order to prove Theorem \ref{thmCorOptimizer}, first,  the following technical lemma has to be established. For $(\Delta x,\varepsilon,n)\in (-x,\infty)\times (-\varepsilon_0,\varepsilon_0)\times \mathbb N$, let us recall that 
$\nabla u(x,0)$, $H_u(x,0)$, and  $\tilde X^{\Delta x, \varepsilon, n}$'s are defined in  \eqref{gradientC}, \eqref{12136C}, and \eqref{8122}, respectively, and set 
\begin{equation}\label{5191}
f(\Delta x, \varepsilon,n):= \frac{u(x,0) + (\Delta x\quad \varepsilon) \nabla u(x,0) + \tfrac{1}{2}(\Delta x\quad \varepsilon) 
H_u(x,0)
\begin{pmatrix}
\Delta x\\
\varepsilon\\
\end{pmatrix} - \mathbb E\left[U\left(\tilde X^{\Delta x, \varepsilon, n}_T\right) \right]}{\Delta x^2 + \varepsilon^2}.
\end{equation}

\begin{lem}\label{51913}
Let us fix $x>0$ and suppose that the validity of the assumptions of Theorem~\ref{mainThm1}.  Let us consider $f$ defined in \eqref{5191}. Then, there exists a  function $g$, which is monotone and such that 
\begin{equation}\label{51911}
g(n)\geq\lim\limits_{|\Delta x|+|\varepsilon|\to 0}f(\Delta x, \varepsilon,n),\quad n\in\mathbb N,
\end{equation}
as well as \begin{equation}\label{51912}
\lim\limits_{n \to \infty}g(n) = 0.
\end{equation}

\end{lem}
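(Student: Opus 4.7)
The plan is to apply Lemma \ref{12132} with $\alpha^0 = \tilde M^{0,n}_T$ and $\alpha^1 = \tilde M^{1,n}_T$ (both in $\mathcal M^\infty(x)$) to obtain a second-order expansion of $\mathbb E[U(\tilde X^{\Delta x, \varepsilon, n}_T)]$ whose Hessian $H_n$ converges as $n\to\infty$ to the Hessian $H_u(x,0)$ of Theorem \ref{mainThm2}. The operator-norm difference of the two Hessians will then define $g$.

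I would first rewrite $\tilde X^{\Delta x, \varepsilon, n}$ from \eqref{8122} by undoing the numeraire change $R \to R^{\{\varepsilon\theta\}}$ via \eqref{Reps} and a Yor-type identity as in the proof of Lemma \ref{8111}; this gives $\tilde X^{\Delta x, \varepsilon, n} = (x+\Delta x)\mathcal E((\widehat\pi + \Delta x\gamma^{0,n} + \varepsilon\gamma^{1,n})\cdot R)/N^\varepsilon$. Applying the analogue of \eqref{772} together with the identities $\gamma^{i,n}\cdot R^{\{\widehat\pi\}} = \tilde M^{i,n}/x$ from \eqref{81111}, and noting that $N^\varepsilon_T$ equals the quantity $\tilde N^\varepsilon$ defined before Lemma \ref{12132},
\[
\tilde X^{\Delta x, \varepsilon, n}_T \;=\; \frac{\xi (x + \Delta x)}{x\, \tilde N^\varepsilon}\, \mathcal E\bigl(\tfrac{\Delta x}{x}\tilde M^{0,n} + \tfrac{\varepsilon}{x}\tilde M^{1,n}\bigr)_T.
\]
This renders $\tilde X^{\Delta x, \varepsilon, n}_T$ as an explicit smooth function of $(\Delta x, \varepsilon)$ whose second-order Taylor expansion at $(0,0)$ matches that of $\xi\,\psi_n(\Delta x, \varepsilon)$ from Lemma \ref{12132}, so a verbatim repetition of the dominated-convergence argument of that lemma (legitimate because $\tilde M^{0,n}, \tilde M^{1,n}$ are bounded for each fixed $n$, $A \leq c_2$ by Assumption \ref{rra}, and $F, G$ possess exponential moments under $\mathbb R(x)$ by Assumption \ref{integrabilityAssumption}) yields
\[
\mathbb E\!\left[U(\tilde X^{\Delta x, \varepsilon, n}_T)\right] \;=\; u(x,0) + (\Delta x\ \ \varepsilon)\nabla u(x,0) + \tfrac{1}{2}(\Delta x\ \ \varepsilon) H_n \begin{pmatrix}\Delta x \\ \varepsilon\end{pmatrix} + o(\Delta x^2 + \varepsilon^2),
\]
where $H_n$ is the Hessian from Lemma \ref{12132} with $\alpha^i = \tilde M^{i,n}_T$. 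The linear part coincides with $\nabla u(x,0)$ since $\tilde M^{i,n}$ are $\mathbb R(x)$-martingales starting at $0$.

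Subtracting this from the expansion of $u(x + \Delta x, \varepsilon)$ in Theorem \ref{mainThm2} and dividing by $\Delta x^2 + \varepsilon^2$ give
\[
\limsup_{|\Delta x| + |\varepsilon| \to 0} f(\Delta x, \varepsilon, n) \;\leq\; \tfrac{1}{2}\|H_u(x,0) - H_n\|_{\mathrm{op}} \;=:\; \bar g(n),
\]
and setting $g(n) := \sup_{m\geq n}\bar g(m)$ produces a nonincreasing upper bound satisfying \eqref{51911}. For \eqref{51912} it suffices to prove entrywise convergence $H_n \to H_u(x,0)$, i.e., $L^1(\mathbb R(x))$-convergence of terms like $A(\xi)(1+\tilde M^{0,n}_T)^2$ and $A(\xi)(xF+\tilde M^{1,n}_T)(1+\tilde M^{0,n}_T)$ to their counterparts with $M^0_T, M^1_T$. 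By construction $\tilde M^{i,n}_T \to M^i_T$ $\Pas$; using the $L^2(\mathbb R(x))$-density of $\mathcal M^\infty(x)$ in $\mathcal M^2(x)$ from \cite[Lemma 6]{KS2006}, one may arrange $\bar M^{i,n}_T \to M^i_T$ also in $L^2(\mathbb R(x))$, and the further truncation preserves this since $\mathbb P(T_n < T) \leq \mathbb E^{\mathbb R(x)}[[\bar M^{i,n}]_T]/n \to 0$ while $([\bar M^{i,n}]_T)_n$ is bounded in $L^1(\mathbb R(x))$. Combined with $A\leq c_2$ and Assumption \ref{integrabilityAssumption}, Cauchy-Schwarz and dominated convergence deliver $H_n \to H_u(x,0)$, whence $g(n) \to 0$.

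The main obstacle is the algebraic identification producing an explicit smooth expression for $\tilde X^{\Delta x, \varepsilon, n}_T$ amenable to Lemma \ref{12132}; one must combine the compensator corrections in \eqref{8121} and \eqref{Reps} carefully and match them against the terminal values of the stochastic exponentials. A secondary difficulty is ensuring that the truncation $\tilde M^{i,n} = \bar M^{i,n}_{\cdot \wedge T_n}$ preserves $L^2(\mathbb R(x))$-convergence of the terminal values, since the truncation parameters depend on $n$ while only $\Pas$ convergence is immediate from the construction.
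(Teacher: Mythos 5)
Your overall strategy --- rewrite $\tilde X^{\Delta x,\varepsilon,n}_T$ as $\xi$ times an explicit smooth function of $(\Delta x,\varepsilon)$, obtain a quadratic expansion of $\mathbb E[U(\tilde X^{\Delta x,\varepsilon,n}_T)]$ with Hessian $H_n$ by a dominated-convergence argument \`a la Lemma~\ref{12132}, and define $g$ from the operator-norm gap $\|H_u(x,0)-H_n\|$ --- is exactly the paper's strategy. The algebraic identity $\tilde X^{\Delta x,\varepsilon,n}_T = \tfrac{\xi(x+\Delta x)}{x\tilde N^\varepsilon}\,\mathcal E\bigl(\tfrac{\Delta x}{x}\tilde M^{0,n}+\tfrac{\varepsilon}{x}\tilde M^{1,n}\bigr)_T$ is correct, as is the claim that, for each fixed $n$, boundedness of $\tilde M^{0,n},\tilde M^{1,n}$, their jumps, and their quadratic variations, together with Assumptions~\ref{rra} and~\ref{integrabilityAssumption}, legitimizes the dominated convergence.

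However, your justification for identifying the resulting Hessian with the $H_n$ produced by Lemma~\ref{12132} (applied with $\alpha^i=\tilde M^{i,n}_T$) is not correct as stated. The claim that ``the second-order Taylor expansion at $(0,0)$ matches that of $\xi\psi_n(\Delta x,\varepsilon)$'' is a pathwise assertion, and it is false pathwise. Writing $A=\tilde M^{0,n}/x$ and $B=\tilde M^{1,n}/x$, the second-order term of $(x+\Delta x)\mathcal E(\Delta x\,A+\varepsilon B)_T$ at $(0,0)$ contains contributions such as $\Delta x^2\bigl(A_T+x\!\int_0^T\! A_{s-}\,dA_s\bigr)$, $\Delta x\varepsilon\bigl(B_T+x\!\int(A_{s-}dB_s+B_{s-}dA_s)\bigr)$, and $\varepsilon^2\,x\!\int_0^T\! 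B_{s-}\,dB_s$, whereas $x+\Delta x(1+\tilde M^{0,n}_T)+\varepsilon\tilde M^{1,n}_T$ has no second-order terms at all. What makes your conclusion nonetheless correct is that each of these discrepancy terms is the terminal value of a true $\mathbb R(x)$-martingale started at $0$ (because $\tilde M^{i,n}$ is a bounded $\mathbb R(x)$-martingale with bounded quadratic variation), so after multiplying by $U'(\xi)\xi=xy\tfrac{d\mathbb R(x)}{d\mathbb P}$ and taking expectations these terms vanish; the $U''(\xi)\xi^2$ contribution involves only first-order pathwise derivatives, which do agree. You need to say this; the ``verbatim repetition'' phrase papers over a genuine pathwise discrepancy. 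The paper itself avoids this issue by defining $\psi$ directly in the stochastic-exponential form and differentiating that, producing $H^n_u(x,0)$ without ever invoking Lemma~\ref{12132}'s $H_w$. The remaining steps of your proposal --- bounding $\limsup_{|\Delta x|+|\varepsilon|\to 0}f$ by $\tfrac12\|H_u(x,0)-H_n\|$, monotonizing, and showing $H_n\to H_u(x,0)$ via $L^2(\mathbb R(x))$-convergence of the truncated approximations --- are sound and fill in details the paper leaves terse.
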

\begin{proof} Using the argument of Lemma \ref{12132}, we essentially get the assertions of the lemma. Therefore, we only present the key steps. 
For a fixed $\varepsilon >0$, let us define 
\begin{equation}\nonumber
\begin{split}
\psi(\Delta x,\varepsilon) &:= \frac{x+\Delta x}{x}\frac{\mathcal E\left((\Delta x\gamma^{0,n} + \varepsilon \gamma^{1,n})\cdot R^{\{\widehat{\pi}\}}\right)}{ \mathcal E\left((\varepsilon\theta) \cdot R\right)},\\
w(\Delta x,\varepsilon) &:= \mathbb E\left[U(\widehat X_T(x,0)\psi(\Delta x,\delta))\right],\quad (\Delta x,\varepsilon)\in (-x,\infty)\times(-\varepsilon_0, \varepsilon_0),\\
\end{split}
\end{equation}
Let us choose $\varepsilon' > 0$, 
then let us pick $(\Delta x,\varepsilon)\in B_{\varepsilon'}(0,0)$, and then set
\begin{displaymath}
\widetilde \psi (z) := \psi(z\Delta x, z\delta),\quad z\in (-1,1).
\end{displaymath}
Setting $W(z) := U(\widetilde \psi(z)\widehat X_T(x,0))$, where $z\in(-1, 1)$, by pointwise differentiation, we obtain
\begin{equation}\nonumber
\begin{split}
W'(z) &= U'( \widetilde \psi(z)\widehat X_T(x,0)) \widetilde \psi'(z)\widehat X_T(x,0), \\
W''(z) &= U''(\widetilde \psi(z)\widehat X_T(x,0))\left( \widetilde \psi'(z)\widehat X_T(x,0)\right)^2 + U'(\widetilde \psi(z)\widehat X_T(x,0))\widetilde \psi''(z)\widehat X_T(x,0).
\end{split}
\end{equation}
Following the argument in Lemma \ref{12132}, boundedness of $\gamma^{0,n}\cdot R^{\{\widehat{\pi}\}} = \tilde M^{0,n}$, $\gamma^{1,n}\cdot R^{\{\widehat{\pi}\}}= \tilde M^{1,n}$, their quadratic variations and jumps, via Assumption \ref{integrabilityAssumption} and Lemma \ref{rraCor}, implies that for some random variable $\eta$ depending on $\varepsilon'$ and which is in $\mathbb L^1(\mathbb P)$ for a sufficiently small $\varepsilon'$, we have
\begin{equation}\nonumber
\left|{W(z_1) - W(z_2)}\right| + \left|{W'(z_1) - W'(z_2)}\right| \leq \eta\left|{z_1 -z_2}\right|.
\end{equation}

The  derivatives of $W$ plugged inside the expectation result in the exact form of the gradient $\nabla u(x,0)$  and the 
Hessian $H^{n}_u(x,0)$, such that $\lim\limits_{n\to\infty}H^{n} _u(x,0) = H_u(x,0)$. This results in the existence of a function $g$ satisfying \eqref{51911} and  \eqref{51912}. Finally, $g$  can be selected to be monotone.
\end{proof}
\begin{proof}[Proof of Theorem \ref{thmCorOptimizer}]
Let us fix $n\in\mathbb N$ and  consider $$(\gamma^{0, n} + \gamma^{1,n})\cdot R^{\{\widehat{\pi}\}} = \tilde M^{0,n} + \tilde M^{1,n}\in\mathcal M^\infty(x).$$ By construction, the jumps of this process process are bounded by $4n$. Therefore, setting $\delta(n):= \min\left(\varepsilon_0, \tfrac{1}{9n}\right)$, we obtain that for every $(\Delta x,\varepsilon)\in B_{\delta(n)}(0,0)$, the jumps of 
\begin{equation}\nonumber
\Delta x\tilde M^{0,n} +\varepsilon \tilde M^{1,n} \quad{\rm and}\quad 
(\varepsilon\theta)\cdot R
\end{equation}
take values in $(-1,1)$. 
Consequently, for every $(\Delta x,\varepsilon)\in B_{\delta(n)}(0,0)$, we get 
$$\mathcal E\left((\Delta x\gamma^{0,n} + \varepsilon \gamma^{1,n})\cdot R^{\{\widehat{\pi}\}}\right)>0 \quad {\rm and}\quad 
 \mathcal E\left((\varepsilon\theta) \cdot R\right)>0.$$
Therefore, via direct computations, we obtain
 \begin{displaymath}
0<\mathcal E\left(\widehat \pi \cdot R\right)\frac{\mathcal E\left((\Delta x\gamma^{0,n} + \varepsilon \gamma^{1,n})\cdot R^{\{\widehat{\pi}\}}\right)}{ \mathcal E\left((\varepsilon\theta) \cdot R\right)}
 =\frac{\mathcal E\left((\widehat \pi+\Delta x\gamma^{0,n} + \varepsilon \gamma^{1,n}) \cdot R\right)} 
 { \mathcal E\left((\varepsilon\theta) \cdot R\right)}
 = \frac{\tilde X^{\Delta x, \varepsilon, n}}{x+\Delta x}.
 \end{displaymath}
 In view of Lemma \ref{keyConcreteCharLem}, this implies that \begin{equation}\label{1141}
 \tilde X^{\Delta x, \varepsilon, n}\in\mathcal X(x+\Delta x, \varepsilon),\quad (\Delta x,\varepsilon)\in B_{\delta(n)}(0,0).\end{equation} This completes the proof of the first assertion of the theorem. 
 
 In order to prove the second assertion, we proceed as follows. 
Using Lemma \ref{51913}, we assert the existence of a monotone function $g$ satisfying \eqref{51911} and \eqref{51912} for $f$ defined in \eqref{5191}.
We set
$$\Phi(n) := \left\{(\Delta x,\varepsilon):~f(t\Delta x, t\varepsilon, n)\leq 2 g(n),~~{\rm for~every}~t\in[0,1]\right\},\quad n\in\mathbb N,$$
$$m(n) := {2}\inf\left\{m\geq n:~B_{1/m}(0,0)\subseteq \Phi(n)\right\}, \quad n\in\mathbb N.$$
Note that $m(n)<\infty$ for every $n\in\mathbb N$. With
$$n(\Delta x,\varepsilon) := \min\left\{ n\in\mathbb N:~ {m(n)} \geq \frac{1}{\sqrt{\Delta x^2 + \varepsilon^2}}\right\}, \quad (\Delta x,\varepsilon)\in(-x,\infty)\times(-\varepsilon_0,\varepsilon_0),$$
we have
\begin{equation}\label{1142}
\lim\limits_{|\Delta x|+|\varepsilon|\to 0}\frac{u(x + \Delta x, \varepsilon) - \mathbb E\left[U\left(\tilde X^{\Delta x, \varepsilon, n( \Delta x,\varepsilon)}_T\right) \right]}{\Delta x^2 + \varepsilon^2} = 0.\end{equation}
 
%
\textcolor{black}{In order to prove the third assertion of this theorem,  
let us consider
$$S^\varepsilon = \left(\frac{1}{N^\varepsilon},\frac{\mathcal E(\rho^1)}{N^\varepsilon},\dots,\frac{\mathcal E(\rho^d)}{N^\varepsilon}  \right),$$
the $(d+1)$-dimensional stock price process under $N^\varepsilon$. By direct computations, we get
\begin{equation}\label{9221}\begin{array}{rcl}
\left(\frac{1}{N^\varepsilon},\frac{\mathcal E(\rho^1)}{N^\varepsilon},\dots,\frac{\mathcal E(\rho^d)}{N^\varepsilon}  \right)
&=&
\left(\frac{1}{\mathcal E\left( (\varepsilon \theta)\cdot R\right)},\frac{\mathcal E(\rho^1)}{\mathcal E\left( (\varepsilon \theta)\cdot R\right)},\dots,\frac{\mathcal E(\rho^d)}{\mathcal E\left( (\varepsilon \theta)\cdot R\right)}  \right)\\
&=&
\left(\mathcal E\left( (e^0-\varepsilon \theta)\cdot R^{\{\varepsilon\theta\}}\right),\dots,\mathcal E\left( (e^d-\varepsilon \theta)\cdot R^{\{\varepsilon\theta\}}\right)\right),\\
\end{array}
\end{equation}
where $e^i$ is the constant-valued process whose $i$-th component equals to $1$ and all other components equal to zero at all times and $\left(R^{\{\varepsilon\theta\}}\right)_{\varepsilon\in(-\varepsilon_0, \varepsilon_0)}$ defined in \eqref{Reps}. 
Therefore, introducing the vector of returns under the num\'eraire $N^\varepsilon$, $R^\varepsilon$, from \eqref{9221} we get
$$R^\varepsilon =\frac{1}{1-\varepsilon} \left((e^0-\varepsilon \theta)\cdot R^{\{\varepsilon\theta\}},\dots,(e^d-\varepsilon \theta)\cdot R^{\{\varepsilon\theta\}}\right),$$
equivalently 
\begin{equation}\label{1251}
R^\varepsilon = \frac{1}{1-\varepsilon} (I - \varepsilon \vec 1 \theta^{\top})\cdot R^{\{\varepsilon\theta\}},
\end{equation}
where $\frac{1}{1-\varepsilon}$ is a normalization constant.} 

%
\textcolor{black}{Following the construction above, see \eqref{1141} and \eqref{1142}, for every $(\Delta x,\varepsilon)$ in a certain neighborhood of the origin, one can find $n(\Delta x,\varepsilon)$, such that $\tilde X^{\Delta x, \varepsilon, n(\Delta x,\varepsilon)}$'s form a family of  wealth processes that match the indirect utility up to the second order. To show that the corrections to optimal proportions (invested in the corresponding stocks) are given by \eqref{01134}, 
 for every $\varepsilon$ being sufficiently close to $0$ and every $\Delta x>-x$, we need to show that 
 $\tilde X^{\Delta x, \varepsilon, n}$'s defined in \eqref{8122} can be represented as
\begin{equation}\label{8172}
\tilde X^{\Delta x, \varepsilon, n} = (x + \Delta x) \mathcal E\left(\left(\left(\widehat \pi + \Delta x\gamma^{0,n} + \varepsilon(-\theta + \gamma^{1,n}) \right)^\top\left((1 - \varepsilon)I + \varepsilon\vec 1 \theta^{\top}\right)\right)^{\top}\cdot R^\varepsilon\right).
\end{equation}
 Here $ \mathcal E\left(\left(\left(\widehat \pi + \Delta x\gamma^{0,n} + \varepsilon(-\theta + \gamma^{1,n}) \right)^\top\left((1 - \varepsilon)I + \varepsilon\vec 1 \theta^{\top}\right)\right)^{\top}\cdot R^\varepsilon\right)\in\mathcal X(1, \varepsilon)$,  by the subsequent argument. We recall that $\theta^0_t = 1 - \sum\limits_{i=1}^d \theta^i_t$, $t\in[0,T]$, as the $\mathcal E(\theta\cdot R)$ is a wealth process of a self-financing portfolio, 
 and therefore,$$\vec 1^{\top} \theta  \equiv 1.$$
Consequently, we have
\begin{equation}\label{8241}
\begin{split}
&\mathcal E\left(\left(\left(\widehat \pi + \Delta x\gamma^{0,n} + \varepsilon(-\theta + \gamma^{1,n}) \right)^\top\left((1 - \varepsilon)I + \varepsilon\vec 1 \theta^{\top}\right)\right)^{\top}\cdot R^\varepsilon\right)\\=& 
\mathcal E\left(\left(\left(\widehat \pi + \Delta x\gamma^{0,n} + \varepsilon(-\theta + \gamma^{1,n}) \right)^\top\left(I + \frac{\varepsilon \vec 1 \theta^{\top}}{1 - \varepsilon \vec 1^{\top} \theta}\right)\right)^\top\cdot ((1 - \varepsilon)R^\varepsilon)\right) \\
= &
\mathcal E\left(\left(\widehat \pi + \Delta x\gamma^{0,n} + \varepsilon(-\theta + \gamma^{1,n}) \right)\cdot R^{\{\varepsilon\theta\}}\right) \\
= &
\frac{\mathcal E\left(\left(\widehat \pi + \Delta x\gamma^{0,n} + \varepsilon\gamma^{1,n} \right) \cdot  R \right)}{\mathcal E((\varepsilon\theta)\cdot R)}\in\mathcal X(1,\varepsilon),\\
\end{split}
\end{equation}
 by Lemma \ref{keyConcreteCharLem} and where the third line in \eqref{8241} is exactly $\frac{\tilde X^{\Delta x, \varepsilon, n}}{x + \Delta x}$ from \eqref{8122}. 
Note that in \eqref{8172}, we used the
Sherman-Morrison inversion formula, which asserts that
}
\textcolor{black}{
 $$\left( I - \varepsilon \vec 1 \theta^{\top}\right)^{-1} = I + \frac{\varepsilon \vec 1 \theta^{\top}}{1 - \varepsilon \vec 1^{\top} \theta}=I + \frac{\varepsilon }{1 - \varepsilon }\vec 1 \theta^{\top},$$
 where in the last equality, we have used again $\vec 1^{\top} \theta  \equiv 1$. 
Therefore, the invertibility of  $\left( I - \varepsilon \vec 1 \theta^{\top}\right)$ holds if and only if 
$\varepsilon\neq 1$. Thus,  in view of \eqref{8241}, the processes in \eqref{8172} match the indirect utility up to the second order in the sense \eqref{1121}. 
Now, in \eqref{8172} the integrand can be rewritten as follows.
 \begin{displaymath}
\begin{split}
&\left(\left(\widehat \pi + \Delta x\gamma^{0,n} + \varepsilon(-\theta + \gamma^{1,n}) \right)^\top\left((1 - \varepsilon)I + \varepsilon\vec 1 \theta^{\top}\right)\right)^{\top} \\
=& \left((1 - \varepsilon)I + \varepsilon \theta\vec 1^{\top}\right)\left(\widehat \pi + \Delta x\gamma^{0,n} + \varepsilon(-\theta + \gamma^{1,n}) \right).
\end{split}
\end{displaymath} 
The latter expression coincides with the one in \eqref{01134},  and, in view of \eqref{8172}, these are the proportions invested in traded assets under the num\'eraire $N^\varepsilon$.}
\end{proof}



\subsection{\textcolor{black}{On perturbations of models that admit closed-form solutions}}\textcolor{black}{
There are many models that admit explicit solutions, see  \cite{Thaleia01}, \cite{KallsenLog}, \cite{imHuMuller05},  \cite{KS2006b}, \cite{GR12}, \cite{Horst2014}, and \cite{Santacroce2} for their constructions and characterizations. In most cases, these solutions depend heavily on the exact dynamics of the stock price, and such solutions cease to exist under perturbations of the model parameters. The results of this paper provide both a stability result (as Theorems \ref{mainThm1} and \ref{161} assert that the value functions and the optimizers of the perturbed models are close to the ones of the unperturbed models) and a  constructive way of obtaining nearly optimal wealth processes and strategies.}

In the preferences are given by power utilities, then closed-form solutions are obtained in, e.g., \cite{GR12}, among others. In the asymptotic analysis, the corrections associated with perturbations of the initial wealth are trivial, as we have
\begin{displaymath}
\widehat X(x,0) = x\widehat X(1,0)  = x\mathcal E\left(\widehat \pi \cdot R\right).
\end{displaymath}
Thus, for the power utility case, in \eqref{8122}, only $\gamma^{1,n}$'s have to be estimated as $\gamma^{0,n} \equiv 0$. The Kunita-Watanabe decomposition provides a characterization of $\gamma^{1,n}$, as the risk-tolerance wealth process exists for the power utility and it is equal to $\widehat X(1,0)$ up to a multiplicative constant. Therefore, the measures $\mathbb {\widetilde R}$ and $\mathbb R$ coincide. This, in particular, is implicitly used in \cite{LMZ}, in the context of perturbations of the market price of risk.

\textcolor{black}{ 
In the case of general utility functions satisfying Assumption \ref{rra}, models that admit closed-form or fairly explicit solutions, are also studied, see,  e.g., \cite{KS2006b} and \cite{MostovyiSirbuModel}. By \cite[Theorem 6]{KS2006b}, 
a  class of models that gives the existence of the risk-tolerance wealth process for every utility function satisfying Assumption \ref{rra}  is the one, where the dual domain $\mathcal Y(1,0)$ admits a maximal element in the sense of the second-order stochastic dominance, i.e., an element $\widehat Y\in\mathcal Y(1,0)$, such that for every $Y\in\mathcal Y(1,0)$, we have
$$\int_0^z\mathbb P[\widehat Y_T\geq y]dy \geq \int_0^z\mathbb P[Y_T\geq y]dy,\quad z\geq0.$$
For example, this holds in a market, where there is a bank account with $0$ interest rate and only one traded stock, whose return is given by:
$$\rho^1_t = \mu t + \sigma B_t,\quad t\in[0,T],$$
for some constants $\mu$ and $\sigma>0$, 
 where the filtration is generated by $(B,W)$ a two-dimensional Brownian motion. Let us consider a one-dimensional and $\rho^1$-integrable process $\theta^1$, such that Assumption \ref{integrabilityAssumption} holds for  $\bar R = -\theta^1\cdot \rho^1 = -\theta\cdot R$, where $\theta = \begin{pmatrix} 1 - \theta^1\\ \theta^1\end{pmatrix}$. 
 In this case, the corresponding family of num\'eraires is $$N^\varepsilon = \mathcal E(\varepsilon\theta^1 \cdot \rho^1),\quad \varepsilon \in\mathbb R.$$
 Here $\varepsilon_0$ from Assumption \ref{boundedJumps} can be set to $\infty$, as there are no jumps of the underlying process $\rho^1$. 
 For a given $x>0$, let us consider $\widehat \pi^1$ and $\pi^{\mathcal R,1}$, such that $\widehat X(x,0) = x\mathcal E\left(\widehat \pi^1\cdot \rho^1 \right)$ and $\mathcal R(x) = \mathcal R_0(x) \mathcal E\left(\pi^{\mathcal R,1}\cdot \rho^1\right)$. Here,  both $\widehat \pi^1$ and $\pi^{\mathcal R,1}$ can be written in terms of the solution to a heat equation. 
Using an $\mathbb {\widetilde R}(x)$ local martingale $R^{\mathcal R,1}:= \rho^1 - \pi^{\mathcal R,1}\cdot [ \rho^1]$ and following Theorem \ref{riskTolThm2}, one needs to consider \eqref{eq:Kunita-Watanabe}, which gives the decomposition of the process $P$, and which in the present settings becomes 
 \begin{equation}\nonumber P=P_0- \varphi\cdot  R^{\mathcal R,1} -
 \varphi^{\perp} \cdot W,
 \end{equation}
 for some processes $\varphi$ and $\varphi^{\perp}$.
%
%
%
%
With
 \begin{equation}\nonumber
 \begin{split}
\zeta^1_t&:= \frac{\mathcal R_t(x)}{\widehat X_t(x,0)\mathcal R_0(x)}(\pi^{\mathcal R,1}_t - \widehat\pi^1_t),\\
\zeta^{0}_t &: = 1 - \zeta^1_t,\\
\upsilon^1_t& := \frac{\mathcal R_t(x)}{\widehat X_t(x,0)}\left(\left({\varphi}\cdot  R^{\mathcal R, 1}\right)_t(\pi^{\mathcal R,1}_t - {\widehat\pi^1_t}) + {\varphi_t}\right)\frac{1}{x},\\
\upsilon^0_t&: = 1 - \upsilon^0_t,\quad t\in[0,T],
\end{split}
\end{equation} 
and  by defining $$\gamma^0 := \begin{pmatrix}\zeta^{0} \\ \zeta^1\end{pmatrix}\quad {\rm and} \quad\gamma^1 := \begin{pmatrix}\upsilon^{0} \\ \upsilon^1\end{pmatrix},$$
one can construct $\gamma^{i, n}$, $i = 0,1$ and $n\in\mathbb N$, appearing in \eqref{8122} via setting $\gamma^{0,n} = \gamma^01_{[0,\tau_n]}$ and $\gamma^{1,n} = \gamma^11_{[0,\sigma_n]}$, $n\in\mathbb N$, where $\tau_n$, $n\in\mathbb N$,  is a localizing sequence for both $M^0(x,0)$ and $[ M^0(x,0)]$ and $\sigma_n$, $n\in\mathbb N$, is a localizing sequence for both $M^1(x,0)$ and $[ M^1(x,0)]$. Note that to get further characterizations of $\gamma^{1,n}$, one typically needs  $\theta$ to be chosen in a more explicit (and restrictive) form that admits a characterization of $\varphi$ in terms of  a system of ordinary differential equations in the spirit of \cite[Example 5.3]{LMZ}. 
Then, with such $\gamma^{i,n}$'s, the nearly optimal wealth processes are given by \eqref{8122}, which reads
\begin{equation}\nonumber
\begin{split}
\tilde X^{\Delta x, \varepsilon, n} &= (x + \Delta x) \mathcal E\left(\left( {\widehat{\pi}} + \Delta x\gamma^{0,n} + \varepsilon(-\theta + \gamma^{1,n}) \right)\cdot R^{\{\varepsilon\theta\}}\right),
\end{split}
\end{equation}
and where  $R^{\{\varepsilon\theta\}}$ is specified in \eqref{Reps}  that in the current settings becomes
$$R^{\{\varepsilon\theta\}} = \begin{pmatrix}0\\ \rho^1 - \varepsilon\theta^1\cdot [ \rho^1] \end{pmatrix}.$$
Therefore, we can rewrite the expression for $\tilde X^{\Delta x, \varepsilon, n}$ as 
$$ (x + \Delta x) \mathcal E\left(\left( {\widehat{\pi}^1} + \Delta x\zeta^{1}1_{[0,\tau_n]} + \varepsilon(-\theta^1 + \upsilon^{1}1_{[0,\sigma_n]}) \right)\cdot \left(\rho^1 - \varepsilon\theta^1\cdot [ \rho^1]\right)\right),$$
where $\widehat \pi^1$ is the second component of $\widehat \pi$. 
Note that for the wealth process $\tilde X^{\Delta x, \varepsilon, n}$,  the proportions of the capital invested in the bank account and stock under the num\'eraire $N^\varepsilon$ are given by \eqref{01134}, which in the current settings reads
\begin{equation}\label{1152}
\begin{pmatrix}
(1-\varepsilon) \left(1 - \left({\widehat{\pi}^1} + \Delta x\zeta^{1}1_{[0,\tau_n]} + \varepsilon(-\theta^1 + \upsilon^{1}1_{[0,\sigma_n]})\right)\right) + \varepsilon(1- \theta^1) \\ 
(1 - \varepsilon)\left({\widehat{\pi}^1} + \Delta x\zeta^{1}1_{[0,\tau_n]} + \varepsilon(-\theta^1 + \upsilon^{1}1_{[0,\sigma_n]})\right) + \varepsilon \theta^1
\end{pmatrix}.
\end{equation}
Further, with
$$\widetilde \pi^{\{1, \Delta x, \varepsilon, n\}}: =  {\widehat{\pi}^1} + \Delta x\zeta^{1}1_{[0,\tau_n]} + \varepsilon(-\theta^1 + \upsilon^{1}1_{[0,\sigma_n]}),$$
one can rewrite \eqref{1152} as
\begin{equation}\label{1151}
\begin{pmatrix}
(1-\varepsilon) \left(1 - \widetilde \pi^{\{1, \Delta x, \varepsilon, n\}}\right) + \varepsilon(1- \theta^1) \\ 
(1 - \varepsilon)\widetilde \pi^{\{1, \Delta x, \varepsilon, n\}} + \varepsilon \theta^1
\end{pmatrix}.
\end{equation}
To recapitulate, in the context of the stochastically dominant model specified above, \eqref{1151} gives proportions invested in the traded assets under the (perturbed) num\'eraires $N^\varepsilon = \mathcal E(\varepsilon\theta^1\cdot \rho^1)$'s, such that the corresponding wealth processes $\tilde X^{\Delta x, \varepsilon, n}$'s  match the indirect utility up to the second order in the sense of Theorem \ref{thmCorOptimizer}, see \eqref{1121} in the statement of this theorem.}
\subsection{On an alternative parametrization of perturbations and a relation to perturbations of the drift and/or volatility}\label{8241a}
In view of the family $R^{\{\varepsilon\theta\}}$, $\varepsilon\in(-\varepsilon_0,\varepsilon_0)$ defined in \eqref{Reps}, that drive the processes \eqref{8122}, a different type of parametrization of perturbations of the form \eqref{Reps} can be used. We will illustrate this in the settings, where $R$ is {\it continuous}. In this case,
if $\theta$ is of the form $-\psi e^i$, where $\psi$ is a one-dimensional bounded and predictable, $\left(\sum\limits_{j = 0}^d[\rho^i, \rho^j]\right)$-integrable process, and $e^i$ is a (constant-valued) vector whose $i$-th component equals to $1$ and all other components equal to zero,
we the following dynamics of the returns of the stocks for perturbed models:
\begin{equation}\label{8191}
\begin{split}
R^{\varepsilon, j} &=\rho^j,\quad {\rm if}\quad  j\neq i,\\
R^{\varepsilon, j} &=\rho^j + \varepsilon \psi \cdot\left(\sum\limits_{k = 0}^d [\rho^k, \rho^j]\right),\quad {\rm if}\quad j=i,
\end{split}
\end{equation}
which in turn corresponds to perturbations of the finite-variation part of the $i$-th asset return only.
This allows to consider perturbations of the finite-variation part of the return process. 
Moreover, by a different choice of $\theta$, we can achieve simultaneous perturbations of multiple returns. 

The relationship between these parametrization and the one considered in the remaining part of the paper can be obtained following the argument in the proof of Theorem \ref{thmCorOptimizer}, see \eqref{1251} there.  
Thus, for perturbations of the form \eqref{8191}, under appropriate regularity conditions (similar to the ones in Theorem \ref{mainThm1}), the expansions of the value functions, derivatives of the optimal wealth processes, and approximations of trading strategies of the form \eqref{8122} follow from the results of the present~paper.

Let us discuss the relation to the framework in \cite{MostovyiSirbuModel}, where there is one traded stock, whose return, $\rho^1$, follows
$$\rho^1 = M + \lambda \cdot\langle M\rangle,$$
where $M$ is a continuous local martingale. 
In this case, \eqref{8191} gives the following dynamics for the perturbed models
\begin{equation}\nonumber
\begin{split}
R^{\varepsilon, 1} &=  \rho^1  + \varepsilon \psi \cdot\langle M\rangle \\
&=  M  + (\lambda + \varepsilon \psi) \cdot\langle M\rangle,
\end{split}
\end{equation}
which is the parametrization of perturbations in \cite{MostovyiSirbuModel}. Further, the prototypical wealth process for a perturbed model, for some $\pi$, is given by 
\begin{equation}\nonumber
\begin{array}{rcl}
X^{\varepsilon} &= &x \mathcal E\left(\pi\cdot\left(\rho^1  + \varepsilon \psi \cdot\langle M\rangle\right)\right).\\
\end{array}
\end{equation}
Under the appropriate boundedness of $\psi$, with $\bar \pi := \pi(\lambda + \varepsilon \psi)$, the evolution of $X^\varepsilon$ can be rewritten as
$$X^{\varepsilon} = x \mathcal E\left((\pi(\lambda + \varepsilon \psi))\cdot \left(\lambda \cdot \langle M\rangle + \frac{\lambda}{\lambda + \varepsilon \psi}\cdot M \right)\right)
=x \mathcal E\left( \bar \pi\cdot\left(\lambda \cdot\langle M\rangle + \frac{\lambda}{\lambda + \varepsilon \psi}\cdot M \right)\right).$$
This corresponds to perturbations of the martingale part (or volatility) of the return, similar to the ones in \cite{herJohSei17}.

\section{Counterxamples}\label{secCounterexamples}
The following example demonstrates the necessity of Assumption \ref{integrabilityAssumption}.
\begin{exa}\label{8231}
Let us assume that the market consists of a bond with zero interest rate and one stock with return $B$, where $B$ is a Brownian motion on the filtered probability space $\left(\Omega, \mathcal F, \left( \mathcal F_t\right)_{t\in[0,1]}, \mathbb P\right)$, where $1$ is the time horizon and $(\mathcal F_t)_{t\in[0,1]}$ is the usual augmentation of the filtration generated by $B$. In this case $\mathbb P$ is the martingale measure. Let us also suppose that $U(x) = \frac{x^p}{p},$ $x\in(0,\infty)$, where $p\in(0,1)$. An application of Jensen's inequality implies that 
for every $y>0$, $v(y) = V(y) = \frac{y^{-q}}{q}$, where $q = \frac{p}{1-p}$, and (a constant-valued process) $y$ is the dual minimizer.

For the perturbed models, where $\bar R = -\theta\cdot B$ is such that $\bar R_1 = |B_1|^{2+ \delta}\sign(B_1)$ for some $\delta >0$. Then, $\mathbb R(x) = \mathbb P$, $x>0$, and for every constant $c>0$, we have 
\begin{displaymath}
\begin{split}
\mathbb E^{\mathbb R(x)}\left[ \exp\left(c(|\bar R_1| + [\bar R,\bar R]_1) \right)\right] &\geq  
\mathbb E\left[ \exp\left(c|B_1|^{2+ \delta}\sign(B_1)\right)\right] \\
&=   \tfrac{1}{\sqrt{2\pi}}\int_{\mathbb R} \exp\left(c|y|^{2+ \delta}\sign(y) - \tfrac{1}{2}y^2\right)dy\\
&= \infty,
\end{split}
\end{displaymath} 
i.e., Assumption \ref{integrabilityAssumption} does not hold. 
Nevertheless, $N^\varepsilon = \mathcal E\left(-\varepsilon\bar R\right)$ is a strictly positive wealth process for every $\varepsilon\in\mathbb R$ and thus a num\'eraire.
For every $x>0$ and $\varepsilon\neq 0$, we have
\begin{displaymath}
\begin{split}
u(x,\varepsilon) &\geq  \mathbb E\left[U\left(\frac{x}{N^\varepsilon_1}\right) \right]\\
&= \mathbb E\left[U\left({x}\exp\left(\varepsilon\bar R_1+\tfrac{\varepsilon^2}{2}[\bar R,\bar R]_1 \right)\right) \right]\\
&\geq \frac{x^p}{p}\mathbb E\left[\exp\left(\varepsilon p \bar R_1\right)\right]\\
&=  \frac{x^p}{p}\mathbb E\left[\exp\left(\varepsilon p |B_1|^{2+ \delta}\sign(B_1)\right)\right]\\
&=  \frac{x^p}{p\sqrt{2\pi}}\int_{\mathbb R}\exp\left(\varepsilon p |y|^{2+ \delta}\sign(y) -\tfrac{1}{2}y^2\right)dy\\
&=  \infty.
\end{split}
\end{displaymath}
\end{exa}
The following example shows that without Assumption \ref{boundedJumps}, we might have a family of processes $(N^\varepsilon)_{\varepsilon \in (-\varepsilon_0, \varepsilon_0)}$, such that for every $\varepsilon\neq 0$, $N^\varepsilon_T < 0$ with positive probability.
\begin{exa}\label{counterExBoundedJumps}
Let us consider model, where there are three times: $0$, $1$, and $2$, where the process ${R}$ is a one-dimensional semimartingale such that 
$${R}_0 = {R}_1 = 1,~ \Pas,~
{\rm and}~{R}_2~{\rm equals~to}~3/2~{\rm or}~1/2~{\rm with~probability}~1/2~{\rm each}.$$ Let us also consider a predictable process $\theta$, such that $$
\theta_1=0,~\Pas,~\theta_2 = n~{\rm with~probability}~\tfrac{1}{2^n},~n\in\mathbb N.$$  Then in \eqref{numReturn}, for every $\varepsilon\neq 0$,
$$\mathbb P\left[\Delta \left( (\varepsilon\theta)\cdot R\right)_2 <-1\right]= \mathbb P\left[
\varepsilon\theta_2( R_2 - R_1) <-1\right] >0,$$
thus, $N^\varepsilon_2<0$ with positive probability. Therefore, {\it for every $\varepsilon\neq 0$, $N^\varepsilon$ is not a num\'eraire}.
\end{exa}
\subsection*{On the necessity of the remaining assumptions}

\begin{enumerate}
\item Conditions \eqref{finCond} and  \eqref{NUPBR} are necessary for the expected utility maximization problem to admit standard conclusions of the utility maximization theory, see the abstract theorems in \cite{KS} 
 and  \cite[Proposition 4.19]{KarKar07}.
We stress that  \eqref{finCond} and  \eqref{NUPBR} are only imposed for $\varepsilon = 0$.

\item Modeling the evolution of stocks with semimartingales is necessary for the absence of arbitrage as above, see \cite[Theorem~1.3]{KostasEmery}, see also \cite[Theorem 1.3]{KostasPlatenSem} for the case of the nonnegative stock price process.

\item If sigma-boundedness in the sense of Assumption \ref{sigmaBoundedness} does not hold, then the second-order expansion in the initial wealth might not exist, see \cite[Example~3]{KS2006}.

\item 
\cite[Example 1 and Example  2]{KS2006} show the necessity of Assumption \ref{rra} for two-times differentiability of the value function in $x$. Note that, by the concavity of the value function in the $x$ variable, two-times differentiability in the $x$ variable at $x>0$ holds if and only if the value function admits a quadratic expansion at $x$ (in the $x$ variable), see \cite[Theorem 5.1.2]{LemHurMinAlg}.
\end{enumerate}
\bibliographystyle{alpha}\bibliography{literature1}
\end{document}